\newcommand{\floor}[1]{\left\lfloor #1 \right\rfloor}
\definecolor{codegreen}{rgb}{0,0.6,0}
\definecolor{codegray}{rgb}{0.5,0.5,0.5}
\definecolor{codepurple}{rgb}{0.58,0,0.82}
\definecolor{backcolour}{rgb}{0.95,0.95,0.92}
\lstdefinestyle{mystyle}{
    backgroundcolor=\color{backcolour},   
    commentstyle=\color{codegreen},
    keywordstyle=\color{magenta},
    numberstyle=\tiny\color{codegray},
    stringstyle=\color{codepurple},
    basicstyle=\ttfamily\footnotesize,
    breakatwhitespace=false,         
    breaklines=true,                 
    captionpos=b,
    keepspaces=true,
    numbers=left,
    numbersep=5pt,
    showspaces=false,
    showstringspaces=false,
    showtabs=false,                  
    tabsize=2
}
\definecolor{shadecolor}{RGB}{242, 242, 242}
\definecolor{mydarkblue}{rgb}{0,0.08,0.45}
\definecolor{mydarkblue}{rgb}{0,0.08,0.45}
\definecolor{myblue}{HTML}{D2E4FC}
\newcommand*\mybluebox[1]{%
\colorbox{myblue}{\hspace{1em}#1\hspace{1em}}}
\definecolor{mygreen}{HTML}{008000}
\definecolor{myorange}{HTML}{FFC48C}
\definecolor{Gray}{gray}{0.92}
\newcommand{\red}{\color{red}}
\DeclareMathOperator*{\argmin}{arg\,min}
\DeclareMathOperator*{\argmax}{arg\,max}
\DeclareMathOperator*{\minimize}{minimize}
\DeclareMathOperator{\LMO}{LMO}
\newcommand{\xmark}{\ding{55}}%
\def\bsp{\begin{split}}
\def\esp{\end{split}}
\def\balpha{\boldsymbol u}
\def\balpha{\boldsymbol \alpha}
\def\btheta{\boldsymbol \theta}
\def\bsigma{\boldsymbol \sigma}
\def\ee{\boldsymbol e}
\def\rr{\boldsymbol r}
\def\xx{\boldsymbol x}
\def\ww{\boldsymbol w}
\def\uu{\boldsymbol u}
\def\vv{\boldsymbol v}
\def\yy{\boldsymbol y}
\def\sss{\boldsymbol s}
\def\CC{\mathcal C}
\def\XX{\boldsymbol X}
\def\RR{\mathbb R}
\def\EE{\mathbb E}
\def\defas{\stackrel{\text{def}}{=}}
\def\tran{^\top}
\def\balpha{{\boldsymbol \alpha}}
\def\defas{\stackrel{\text{def}}{=}}
\newtheorem{theorem}{Theorem}
\newtheorem{lemma}{Lemma}
 \newtheorem{proposition}{Proposition}
 \newtheorem{remark}{Remark}
\newcounter{parentnumber}
\icmltitlerunning{Stochastic Frank-Wolfe for Constrained Finite-Sum Minimization}
\begin{document}




\twocolumn[
    \icmltitle{Stochastic Frank-Wolfe for Constrained Finite-Sum Minimization}

    \icmlsetsymbol{equal}{*}
    
    \begin{icmlauthorlist}
    \icmlauthor{Geoffrey N\'egiar}{ucb}
    \icmlauthor{Gideon Dresdner}{eth}
    \icmlauthor{Alicia Yi-Ting Tsai}{ucb}\\
    \icmlauthor{Laurent El Ghaoui}{ucb,sumup}
    \icmlauthor{Francesco Locatello}{eth,mpi}
    \icmlauthor{Robert M. Freund}{mit}
    \icmlauthor{Fabian Pedregosa}{google}
    \end{icmlauthorlist}

    \icmlaffiliation{ucb}{Berkeley AI Research, University of California, Berkeley, CA, USA}
    \icmlaffiliation{google}{Google Research}
    \icmlaffiliation{eth}{Department of Computer Science, ETH Zurich, Switzerland}
    \icmlaffiliation{mpi}{Max-Planck Institute for Intelligent Systems, T\"ubingen, Germany}
    \icmlaffiliation{sumup}{SumUp Analytics}
    \icmlaffiliation{mit}{MIT Sloan School of Management}
    \icmlcorrespondingauthor{Geoffrey N\'egiar}{\mbox{geoffrey\_negiar@berkeley.edu}}
    
    \vskip 0.3in
]
\printAffiliationsAndNotice{}
\begin{abstract}
We propose a novel Stochastic Frank-Wolfe (a.\/k.\/a.\ conditional gradient) algorithm for constrained smooth finite-sum minimization with a generalized linear prediction/structure. This class of problems includes empirical risk minimization with sparse, low-rank, or other structured constraints.
The proposed method is simple to implement, does not require step-size tuning, and has a constant per-iteration cost that is independent of the dataset size.
Furthermore, as a byproduct of the method we obtain a stochastic estimator of the Frank-Wolfe gap that can be used as a stopping criterion.
Depending on the setting, the proposed method matches or improves on the best computational guarantees for Stochastic Frank-Wolfe algorithms. Benchmarks on several datasets highlight different regimes in which the proposed method exhibits a faster empirical convergence than related methods.
Finally, we provide an implementation of all considered methods in an open-source package.
\end{abstract}

\section{Introduction}

We consider constrained finite-sum optimization problems of the form

\begin{empheq}[box=\mybluebox]{equation*}\tag{OPT}\label{eq:obj_fun}
 \minimize_{\ww \in \CC}\, \frac{1}{n}\sum_{i=1}^n f_i\left(\xx_i\tran \ww\right),
\end{empheq} 

where $\CC$ is a compact and convex set and $\XX = (\xx_1, \cdots, \xx_n)\tran \in \RR^{n \times d}$ is a data matrix, with $n$ samples and $d$ features. This template includes several problems of interest, such as constrained empirical risk minimization. The LASSO \cite{tibshirani96regression} may be written in this form, where $f_i(\xx_i\tran\ww) = \mfrac{1}{2}(\xx_i\tran\ww - y_i)^2$ and $\CC = \{\ww : \|\ww\|_1 \leq \lambda\}$ for some parameter $\lambda$. We focus on the case where the $f_i$s are differentiable with $L$-Lipschitz derivative, and study the convex and non-convex cases. 

\looseness=-1The classical Frank-Wolfe (FW) or Conditional Gradient algorithm \cite{frank1956algorithm, levitin1966constrained, dem1967minimization} is an algorithm for constrained optimization. Contrary to other projection-based constrained optimization algorithms, such as Projected Gradient Descent, it relies on a Linear Minimization Oracle (LMO) over the constraint set $\cal C$, rather than a Quadratic Minimization Oracle (the projection subroutine). 
For certain constraint sets such as the trace norm or most $\ell_p$ balls, the LMO can be computed more efficiently than the projection subroutine. Recently, the Frank-Wolfe algorithm has garnered much attention in the machine learning community where polytope constraints and sparsity are of large interest, e.g. \citet{jaggi2013revisiting, lacoste2015global, locatello17a}. 


\looseness=-1In the unconstrained setting, stochastic variance-reduced methods \cite{shalev2013stochastic, Schmidt2013MinimizingFS, hofmann2015variance} exhibit the same iteration complexity as full gradient (non-stochastic) methods, while reaching much smaller per-iteration complexity, usually at some (small) additional memory cost.
This work takes a step in the direction of designing such a method for Frank-Wolfe type algorithms, which remains an important open problem.

\begin{table}[t]
\caption{Worst-case convergence rates for the function suboptimality after $t$ iterations, for a dataset with $n$ samples. $\kappa \leq n$ and can be much smaller than $n$ for datasets of interest. $\kappa$ is introduced in Section~\ref{sec:discussion}.}
\label{tab:rates}
\begin{center}
\begin{small}
\begin{sc}
\begin{tabular}{lcc}
\toprule
Related Work & Convex & Non-Convex \\
\midrule
\citet{frank1956algorithm} & $\mathcal{O}\left({n}/{t}\right)$ & $\mathcal O\left({n}/\sqrt{t}\right)$  \\
\citet{mokhtari2018stochastic}   & $\mathcal{O}\left(1/\sqrt[3]{t}\right)$ & {\red\large\xmark} \\
\citet{lu2018generalized} & $\mathcal{O}\left({n}/{t}\right)$ & {\red\large\xmark}  \\
This work & $\mathcal{O}\left(\kappa/t\right)$ & $\to 0$ \\
\bottomrule
\end{tabular}
\end{sc}
\end{small}
\end{center}
\vskip -0.2in
\end{table}

\pagebreak
Our { main contributions} are:
\begin{enumerate}[leftmargin=*]
    \item A {\bfseries constant batch-size Stochastic Frank-Wolfe (SFW) algorithm for finite sums with linear prediction}. We describe the method in Section \ref{sec:methods} and discuss its computational and memory cost. 
    \item A {\bfseries non-asymptotic rate analysis  on smooth and convex objectives}. The suboptimality of the SFW algorithm after $t$ iterations can be bounded as $\mathcal{O}\left({\kappa}/{t}\right)$, where $\kappa$ is a data-dependent constant we will discuss later. It is upper bounded by the sample-size $n$ but, depending on the setting, can be potentially much smaller.
    \item An {\bfseries asymptotic analysis for non-convex objectives}.
    We prove that SFW converges to a stationary point for smooth but potentially non-convex functions. This is the first stochastic FW variant that has convergence guarantees in this setting of large practical interest.

\end{enumerate}
Finally, we compare the SFW algorithm with other stochastic Frank-Wolfe algorithms amenable to constant batch size on different machine learning tasks. These experiments show that the proposed method converges at least as fast as previous work, and notably faster on several such instances.

\subsection{Related Work}
We split existing stochastic FW algorithm into two categories: methods with increasing batch size and methods with constant batch size.

\paragraph{Increasing batch size Stochastic Frank-Wolfe.}
This variant allows the number of gradient evaluations to grow with the iteration number \cite{pmlr-v54-goldfarb17a, hazan2016variance, reddi2016stochastic}. Because of the growing number of gradient evaluations, these methods converge towards a deterministic full gradient FW algorithm and so asymptotically share their computational requirements.  In this work we will instead be interested in \emph{constant batch-size} methods, in which the number of gradient evaluations does not increase with the iteration number. See \citet{hazan2016variance} for a detailed comparison of assumptions and complexities for Stochastic Frank-Wolfe methods with increasing batch sizes, in terms of both iterations and gradient calls.

\paragraph{Constant batch size Stochastic Frank-Wolfe.} These methods use a constant batch size $b$, which is chosen by the user as a hyperparameter.
 In the convex and smooth setting, \citet{mokhtari2018stochastic} and \citet{locatello2019} reach $\mathcal O\left({1}/{\sqrt[3]{t}}\right)$ convergence rates.
 The rate of~\citet{locatello2019} further holds for non-smooth and non-Lipschitz objectives.
 \citet{zhang2019one} requires second order knowledge of the objective.
\citet{lu2018generalized} proves convergence for an averaged iterate in $\mathcal{O}(n/t)$ with $n$ the number of samples in the dataset. Let us assume for simplicity that we use unit batch size. Since each iteration involves only one data point, the per-iteration complexity of their method reduces by a factor of $n$ the per-iteration complexity of full-gradient method. On the other hand, the method proposed in this work loses this factor in the rate in number of iterations, reaching the same overall complexity as the deterministic full gradient method. 
Depending on the use-case (large or small datasets), each of the rates reported in \citet{lu2018generalized} and \citet{mokhtari2018stochastic} can have an advantage over the other. In favorable cases, the rate of convergence achieved by our method is nearly independent of the number of samples in the dataset. In these cases, our method is therefore faster than both. In the worst case, it matches the $\mathcal{O}\left({n}/{t}\right)$ bound \cite{lu2018generalized}. 

\subsection{Notation}
Throughout the paper we denote vectors in lowercase boldface letters ($\ww$), matrices in uppercase boldface letters ($\XX$), and sets in calligraphic letters (e.g., $\mathcal{C}$).
We say a function $f$ is $L$-smooth in the norm $\| \cdot\|$ if it is differentiable and its gradient is $L$-Lipschitz continuous with respect to $\| \cdot\|$, that is, if it verifies $\|\nabla f(\xx) - \nabla f(\yy)\|_* \leq L\|\xx - \yy\|$ for all $\xx, \yy$ in the domain (where $\| \cdot\|_*$ is the dual norm of $\| \cdot\|$). For a one dimensional function $f$, this reduces to $|f'(z) - f'(z')| \leq L |z-z'|$ for all $z,~z'$ in the domain.
For the time dependent vector $\uu_t$, we denote by $\uu_t^{(i)}$ its $i$-th coordinate.

We distinguish $\EE$, the full expectation taken with respect to all the randomness in the system, from $\EE_{t}$, the conditional expectation with respect to the random index sampled at iteration $t$,  conditioned on all randomness up to iteration $t$.

Finally, $\LMO(\uu)$ returns an arbitrary element in $\argmin_{\sss\in \CC} \langle \sss, \uu \rangle$.

\section{Methods}
\label{sec:methods}

\subsection{A Primal-Dual View on Frank-Wolfe}

In this subsection, we present the Frank-Wolfe algorithm as an alternating optimization scheme on a saddle-point problem. This point of view motivates the design of the proposed SFW algorithm. This perspective is similar to the two player game point of view of \citet{abernethy2017OnFA, abernethy18a}, which we express using convex conjugacy.
We suppose here that $f$ is closed, convex and differentiable.

Let us rewrite our initial problem \eqref{eq:obj_fun} in the equivalent unconstrained formulation
  \begin{equation}
    \label{eq:fw2}
    \minimize_{\ww \in \RR^d} f(\XX\ww) + \imath_{\mathcal{C}}(\ww)~,
  \end{equation}
  where $\imath_\CC$ is the indicator function of $\CC$: it is $0$ over $\CC$ and $+\infty$ outside of $\CC$. 
  
  We denote by $f^\ast$ the convex conjugate of $f$, that is, $f^\ast(\balpha) \defas \max_{\ww}\langle \balpha, \ww\rangle -f(\ww)$. 
  Whenever $f$ is closed and convex, it is known that $f = (f^\ast)^\ast$, and so we can write $f(\XX\ww) = \max_{\balpha} \{ -f^\ast(\balpha) + \langle \XX\ww, \balpha\rangle\}$. Plugging this identity into the previous equation, we arrive at a saddle-point reformulation of the original problem:

\begin{equation}\label{eq:saddle_reformulation}
\min_{\ww \in \RR^d}\max_{\balpha \in \RR^n} \left\{ \mathcal{L}(\ww, \balpha) \defas - f^\ast(\balpha) +  \imath_{\mathcal{C}}(\ww) + \langle \XX\ww, \balpha\rangle \right\}.
\end{equation}

This reformulation allows to derive the Frank-Wolfe algorithm as an alternating optimization method on this saddle-point reformulation. To distinguish the algorithm in this section from the stochastic algorithm we propose, we denote the iterates in this section by $\bar\balpha_t$, $\bar\ww_t$.

The first step of the Frank-Wolfe algorithm is to compute the gradient of the objective at the current iterate. In the saddle-point formulation, this corresponds to maximizing over the dual variable $\balpha$ at step $t$:

\begin{align}
&\bar\balpha_{t} \in
 \argmax_{\balpha \in \RR^n} \left\{\mathcal{L}(\bar\ww_{t-1}, \balpha) =  - f^\ast(\balpha) + \langle \XX\bar\ww_{t-1}, \balpha\rangle\right\} \notag\\
 &\iff \bar\balpha_t = \nabla f(\XX\bar\ww_{t-1}).\label{eq:exact_dual_minimization}
 \end{align}
 
Then, the LMO step corresponds to fixing the dual variable and minimizing over the primal one $\ww$. This gives 
 
 \begin{align}
&\bar\sss_{t} \in \argmin_{\ww \in \RR^d} \left\{\mathcal{L}(\ww, \bar\balpha_t) = \imath_{\CC}(\ww) + \langle \ww,\XX\tran \bar{\balpha_t} \rangle \right\} \notag\\
&\iff \bar\sss_t = \LMO(\XX\tran\bar\balpha_t).
\end{align}

Note that from the definition of the $\LMO$,  $\bar\sss_t$ can always be chosen as an extreme point of the constraint set $\CC$. We then update our iterate using the convex combination
\begin{align}
\bar\ww_{t} = (1 - \gamma_t)\bar\ww_{t-1} + \gamma_t \bar \sss_t,
\end{align}

where $\gamma_t$ is a step-size to be chosen.
These updates determine the Frank-Wolfe algorithm.

\subsection{The Stochastic Frank-Wolfe Algorithm}

\begin{algorithm}[tb]
   \caption{Stochastic Frank-Wolfe} \label{alg:sfw}
\begin{algorithmic}[1]
   \STATE {\bfseries Initialization:} $\ww_0\in\CC$, $\balpha_0 \in \RR^n$, ${\rr_0 =\XX^\top\balpha_0}$

    \FOR{$t=1, 2, \dots, $}
        \STATE Sample $i \in \{1, \ldots, n\}$ uniformly at random. \label{lst:line:sample}
        \STATE Update $\balpha_t^{(i)} = \frac{1}{n}f'_i(\xx_i\tran\ww_{t-1})$
        \STATE Update $\balpha_t^{(j)} = \balpha_{t-1}^{j}$, $j\neq i$
        \label{lst:line:refresh_alpha}
       
        \STATE $\rr_{t} = \rr_{t-1} + (\balpha_{t}^{(i)} - \balpha_{t-1}^{(i)}) \xx_i$ \label{lst:line:refresh_gradient}
    
        \STATE $\sss_t = \LMO(\rr_t)$  \label{lst:line:lmo}
        
        \STATE $\ww_{t} = \ww_{t-1} + \frac{2}{t+2}(\sss_t - \ww_{t-1})$  \label{lst:line:update_iterate}
   \ENDFOR
\end{algorithmic}
\end{algorithm}
We now consider a variant in which we replace the exact minimization of the dual variable \eqref{eq:exact_dual_minimization} by a minimization over a single coordinate, chosen uniformly at random.

Let us define the function $f$ from $\RR^n$ to $\RR$ as $f(\btheta) \defas \frac{1}{n} \sum_{i=1}^n f_i(\btheta_i)$. We can write our original optimization problem as an optimization over $\ww\in\CC$ of $f(\XX\ww)$. Still alternating between the primal and the dual problems, we replace maximization over the full vector $\balpha$ in \eqref{eq:exact_dual_minimization} with optimization along the coordinate $i$ only. We obtain the update $\balpha_{t}^{(i)} = \frac{1}{n}f_i'(\xx_i\tran\ww_{t-1})$.
Doing so changes the cost per-iteration from $\mathcal{O}(nd)$ to  $\mathcal{O}(d)$, and yields Algorithm~\ref{alg:sfw}.

We now describe our main contribution, Algorithm \ref{alg:sfw} (SFW) above. It follows the classical Frank-Wolfe algorithm, but replaces the gradient with a stochastic estimate of the gradient.

Throughout Algorithm \ref{alg:sfw}, we maintain the following iterates:
\begin{itemize}
    \item the iterate $\ww_t$,
    \item the stochastic estimator of $\nabla f(\XX\ww_{t-1})$ denoted by $\balpha_t \in \RR^n$,
    \item the stochastic estimator of the full gradient of our loss $\XX\tran\nabla f(\XX\ww_{t-1})$,  denoted by $\rr_t \in \RR^d$.
\end{itemize}

\textbf{Algorithm.} At the beginning of iteration $t$, we have access to $\balpha_{t-1}$, $ \rr_{t-1}$ and to the iterate $\ww_{t-1}$.

Thus equipped, we sample an index $i$ uniformly at random over $\{1,\dots, n\}$. We then compute the gradient of our loss function for that datapoint, on our iterate, yielding $[\nabla f(\XX\ww_{t-1})]_i = \frac{1}{n}f'_i(\xx_i\tran \ww_{t-1})$. We update the stochastic gradient estimator $\balpha_t$ by refreshing the contribution of the $i$-th datapoint and leaving the other coordinates untouched.

\begin{remark}
Coordinate $j$ of our estimator $\balpha_t$ contains the latest sampled one-dimensional derivative of $\frac{1}{n}f_j$.
\end{remark}

To get $\rr_t$, we do the same, removing the previous contribution of the $i$-th datapoint, and adding the refreshed contribution. This allows us not to store the full data-matrix in memory.

The rest of the algorithm continues as the deterministic Frank-Wolfe algorithm from the previous subsection: we find the update direction from $\sss_t = \LMO(\rr_t)$, and we update our iterate using a convex combination of the previous iterate $\ww_{t-1}$ and $\sss_t$, whereby our new iterate is feasible.

\begin{remark}
Our algorithm requires to keep track of the $\balpha_t$ vector and amounts to keeping one scalar per sample in memory. Our method requires the same small memory caveat as other variance reduced algorithms such as SDCA \cite{shalev2013stochastic}, SAG \cite{Schmidt2013MinimizingFS} or SAGA \cite{defazio14SAGA}. 
Despite the resemblance of our gradient estimator to the Stochastic Average Gradient \cite{Schmidt2013MinimizingFS}, the convergence rate analyses are quite different.
\end{remark}

\section{Analysis}
\label{sec:analysis}

\subsection{Preliminary tools}
Recall that in our setting, our objective function is $\ww\mapsto f(\XX \ww)$, where $f(\btheta) = \frac{1}{n}\sum_{i=1}^n f_i(\btheta_i)$. We suppose that for all $i$, $f_i$ is $L$-smooth, which then implies that $f$ satisfies the following  non-standard smoothness condition:
\begin{align}\label{norman}
\|\nabla f(\btheta) - \nabla f(\bar\btheta)\|_p\le \frac{L}{n}\|\btheta - \bar\btheta\|_p \  
\end{align}

for every $p \in [1,\infty]$. Note that in this inequality -- unlike in the standard definition of $L$-smoothness with respect to the $\ell_p$ norm -- the same norm appears on both sides of the inequality. This inequality is proven in Appendix~\ref{apx:proof_smoothness}.  In particular it follows from \eqref{norman} that $f$ is $(L/n)$-smooth with respect to the $\ell_2$ norm.

We therefore have the following quadratic upper bound on our objective function $f$, valid for all $\ww, \vv$ in the domain:
\begin{align}
\bsp
    f(\XX \ww) &\leq f(\XX\vv) + \langle \nabla f(\XX\vv), \ww-\vv \rangle\\
    &\qquad + \frac{L}{2n}\|\XX\left(\ww-\vv\right)\|_2^2~.
    \label{eq:key_ineq}
    \esp
\end{align}

For $p\in \{1,2, \infty\}$, we define the diameters

\begin{align}
    D_p &= \max_{\uu, \vv\in \CC}\|\XX(\uu-\vv)\|_p \label{eq:diam_p}.
\end{align}

\begin{remark}
For $p\in \{1, 2\}$, we have that $D_p^p \leq n D_\infty^p $. 
\end{remark}

\subsection{Worst-Case Convergence Rates for Smooth and Convex Objectives}

We state our main result in the $L$-smooth, convex setting. 
In this section, we suppose that  the $f_i$s are $L$-smooth and convex and that for all $\btheta$, $f(\btheta)=\frac{1}{n}\sum_{i=1}^n f_i(\btheta_i)$. The objective function $f$ then satisfies \eqref{norman} as noted previously.

\begin{theorem}\label{theorem:convex_rate}
Let $H_0 \defas \|\balpha_{0} - \nabla f(\XX\ww_{0})\|_1$ be the initial error of our gradient estimator and $\ww_\star\in\CC$ a solution to \ref{eq:obj_fun}. We run Algorithm \ref{alg:sfw} with step sizes $\gamma_t=2/(t+2)$.
At time-step $t\geq 2$, the expected primal suboptimality $\EE\varepsilon_t = {\EE[f(\XX\ww_t) - f(\XX\ww_\star)]}$ has the following upper bound

\begin{align}
\label{eq:convex_rate}
\bsp
    \EE\varepsilon_t \leq& 2L\left( \frac{D^2_2 + 4(n-1)D_1D_\infty}{n}  \right) \frac{t}{(t+1)(t+2)} \\
    &+  \frac{2 \varepsilon_0 + (2 D_\infty H_0 + 64 {L D_1 D_\infty}) n^2}{(t+1)(t+2)}
\esp
\end{align}

\end{theorem}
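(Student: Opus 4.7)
The plan is to combine a one-step primal descent inequality (driven by smoothness and the LMO optimality) with a separate recursion controlling the error of the SAG-like gradient estimator $\balpha_t$, and then to sum everything up against the schedule $\gamma_t = 2/(t+2)$.

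\textbf{Step 1 (One-step descent for $\varepsilon_t$).} Apply the quadratic upper bound \eqref{eq:key_ineq} to the update $\ww_t = \ww_{t-1} + \gamma_t(\sss_t - \ww_{t-1})$ and bound $\|\XX(\sss_t - \ww_{t-1})\|_2^2 \leq D_2^2$. Split the linear term by writing $\nabla f(\XX\ww_{t-1}) = \balpha_t + (\nabla f(\XX\ww_{t-1}) - \balpha_t)$; for the $\balpha_t$ piece, use the LMO optimality $\langle \XX^\top \balpha_t, \sss_t\rangle \le \langle \XX^\top \balpha_t, \ww_\star\rangle$ and then convexity $\langle \nabla f(\XX\ww_{t-1}), \XX(\ww_\star - \ww_{t-1})\rangle \leq -\varepsilon_{t-1}$. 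After rearranging, the error terms collapse into a single inner product and $\ell_1$/$\ell_\infty$ Hölder gives
\begin{equation*}
\varepsilon_t \leq (1-\gamma_t)\varepsilon_{t-1} + \gamma_t D_\infty \,\tilde H_t + \frac{L \gamma_t^2 D_2^2}{2n},
\end{equation*}
where $\tilde H_t \defas \|\balpha_t - \nabla f(\XX\ww_{t-1})\|_1$.

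\textbf{Step 2 (Recursion for the estimator error).} Since a single uniformly random coordinate of $\balpha_t$ is refreshed exactly at step $t$, conditioning on the past yields $\EE_t\!\left[\tilde H_t\right] = \frac{n-1}{n}\|\balpha_{t-1} - \nabla f(\XX\ww_{t-1})\|_1$. Inserting $\nabla f(\XX\ww_{t-2})$, applying triangle inequality, and using the data-adapted smoothness \eqref{norman} with $p=1$ together with $\|\XX(\ww_{t-1}-\ww_{t-2})\|_1 \le \gamma_{t-1} D_1$ gives, after taking full expectation and letting $h_t \defas \EE[\tilde H_t]$,
\begin{equation*}
h_t \;\leq\; \rho\, h_{t-1} + \rho \cdot \frac{L D_1 \gamma_{t-1}}{n}, \qquad \rho \defas \frac{n-1}{n}.
\end{equation*}
Unrolling yields the closed-form $h_t \leq \rho^t H_0 + \frac{L D_1}{n}\sum_{s=1}^t \rho^{t-s+1}\gamma_{s-1}$.

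\textbf{Step 3 (Telescoping with the step-size schedule).} With $\gamma_t = 2/(t+2)$, multiply the Step 1 inequality by $(t+1)(t+2)$ to obtain, for $E_t \defas (t+1)(t+2)\EE\varepsilon_t$,
\begin{equation*}
E_t \;\leq\; E_{t-1} \;+\; 2(t+1) D_\infty\, h_t \;+\; \frac{2L D_2^2(t+1)}{n(t+2)}.
\end{equation*}
Summing from $s=1$ to $t$ gives $E_t \leq E_0 + 2D_\infty \sum_{s=1}^t (s+1) h_s + \frac{2L D_2^2 t}{n}$.

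\textbf{Step 4 (Bounding the double sum).} Plug in the Step 2 bound and swap the order of summation in $\sum_{s=1}^t (s+1)\sum_{r=1}^s \rho^{s-r+1}\gamma_{r-1}$. The inner geometric tail is handled by $\sum_{k\geq 0}\rho^k = n$ and $\sum_{k\geq 0} k\rho^k = \rho n^2$, so
\begin{equation*}
\sum_{s=1}^t (s+1)\sum_{r=1}^s \rho^{s-r+1}\gamma_{r-1} \;\lesssim\; n\, t + n^2,
\end{equation*}
and $\sum_{s=1}^t (s+1)\rho^s \lesssim n^2$ for the initial-condition piece. This produces the two dominant contributions to $E_t$: a linear-in-$t$ piece of order $L D_1 D_\infty\, t$ (with the explicit constants assembling to $8(n-1)L D_1 D_\infty/n$) plus a constant piece of order $n^2(D_\infty H_0 + L D_1 D_\infty)$ from the transient.

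\textbf{Step 5 (Conclude).} Adding the $\frac{2L D_2^2}{n}\,t$ term and dividing through by $(t+1)(t+2)$ recovers exactly the two-term bound in \eqref{eq:convex_rate}, with the $t$-independent contribution bundling $E_0 = 2\varepsilon_0$ together with the transient from $H_0$ and the geometric initialization. The main obstacle is Step 4: naive bounding of $\sum_s (s+1)\sum_{r\le s}\rho^{s-r+1}/(r+1)$ introduces a spurious $\log t$ factor, and one must swap summation order and exploit the two closed-form geometric identities to isolate the clean $O(nt) + O(n^2)$ decomposition that matches the theorem's form and keeps $\kappa$ free of logarithmic factors.
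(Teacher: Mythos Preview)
Your Steps 1--3 reproduce the paper's Lemmas~\ref{lemma:sufficient_decrease} and~\ref{lemma:ht_upper_bound} and the telescoping setup exactly, and Step~5 is routine once Step~4 delivers. The gap is in Step~4: swapping the order of summation does \emph{not} eliminate the logarithm. After swapping and substituting $k=s-r+1$, the double sum becomes
\[
\sum_{r=1}^t\gamma_{r-1}\sum_{k=1}^{t-r+1}(r+k)\rho^{k}
\;\le\;\sum_{r=1}^t\frac{2}{r+1}\Big(r\cdot\tfrac{\rho}{1-\rho}+\tfrac{\rho}{(1-\rho)^2}\Big)
\;=\;2(n-1)\sum_{r=1}^t\frac{r}{r+1}\;+\;2n(n-1)\sum_{r=1}^t\frac{1}{r+1},
\]
using precisely your two geometric identities. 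The first piece is $\le 2(n-1)t$ as desired, but the second is a bare harmonic sum and yields $\sim 2n(n-1)\log t$, not $O(n^2)$. No rearrangement after swapping can avoid this: once the geometric factor is summed out in $s$ for fixed $r$, the exponential decay is gone and only $\sum_r 1/(r+1)$ remains. So your claimed ``clean $O(nt)+O(n^2)$ decomposition'' is $O(nt)+O(n^2\log t)$, and the resulting bound on $\EE\varepsilon_t$ carries an extra $\log t$ that does not match the theorem.

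The paper's fix is orthogonal to swapping: it bounds $h_s$ \emph{pointwise before} summing, via Lemma~\ref{lemma:ht_asymptotics}. For each fixed $s$ it splits the convolution $\sum_{r=1}^s \rho^{s-r+1}/(r+1)$ at $r=\lfloor s/2\rfloor$: early terms retain a factor $\rho^{s/2}$ (which then multiplies the harmonic $\log s$), late terms retain the small step-size $2/(s+2)$ (times the full geometric tail $\le n-1$). This gives
\[
h_s \;\lesssim\; \tfrac{LD_1}{n}\Big(\tfrac{2(n-1)}{s+2}+\rho^{s/2}\log s\Big)+\rho^s H_0 .
\]
Only \emph{now} does one multiply by $(s+1)$ and sum over $s$: the $\rho^{s/2}\log s$ piece is still exponentially damped, so $\sum_{s\ge 1}(s+1)\rho^{s/2}\log s$ converges to a $t$-independent constant (the paper bounds it crudely by $16n^3$, producing the $64LD_1D_\infty n^2$ in the theorem). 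The point is that the split-at-$s/2$ must happen \emph{inside} each $h_s$, before the outer sum consumes the geometric decay; your swap reverses that order and loses the mechanism.
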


\begin{remark}
The rate of the proposed method in terms of gradient calls is also given by \eqref{eq:convex_rate} (one gradient call per iteration), whereas for deterministic Frank-Wolfe, the (deterministic) suboptimality after $t$ gradient calls has the following upper bound~\citep{jaggi2013revisiting, hazan2016variance}

\begin{equation}
     \varepsilon_t \leq  \frac{2 L D_2^2}{t}\,.
\end{equation}
\end{remark}

In this paper, we will only discuss unit batch size. We can adapt our algorithm and proofs to consider sampling a mini-batch of $b$ datapoints at each step. The leading term in our rate from Theorem~\ref{theorem:convex_rate} will change: we will use $\rho=1-\mfrac{b}{n}$ in Lemma~\ref{lemma:ht_asymptotics}. The overall rate will be modified accordingly. The per-iteration complexity will then become $\mathcal{O}(bd)$.

We first \textbf{sketch the outline of the proof} before delving into details.
The proof of this convergence rate builds on three key lemmas.
The first is an adaptation of Lemma~2 of \citet{mokhtari2018stochastic} which bounds the suboptimality at step $t$ by the sum of a contraction in the suboptimality at $t-1$, a vanishing term due to smoothness, and a last term depending on our gradient estimator's error in $\ell_1$ norm. The first two terms show up in the convergence proof of the full-gradient Frank-Wolfe, see \citet{lacoste2015global}. The last term is an error, or noise term. Supposing the error term vanishes fast enough, we can fall back on the full-gradient proof technique  \cite{frank1956algorithm, jaggi2013revisiting}.

From there, we show that the error term verifies a particular recursive inequality in lemma \ref{lemma:ht_upper_bound}. In lemma \ref{lemma:ht_asymptotics}, we then leverage this inequality to prove that the error term vanishes as $\mathcal O(1/t)$, finally allowing us to obtain the promised rate.
The formal statements of these lemmas follow.

\begin{lemma}
\label{lemma:sufficient_decrease}
Let $f_i$ be convex and $L$-smooth for all $i$. For \emph{any} direction $\balpha_t\in \RR^n$, define $\sss_t = \LMO(\XX\tran \balpha_t)$, $\xx_t = (1-\gamma_t)\xx_{t-1} + \gamma_t \sss_t$ and $H_t = \|\balpha_t - \nabla f(\XX\ww_{t-1})\|_1$.

We have the following upper bound on the primal suboptimality at step $t$:
\begin{align}
\bsp
    \varepsilon_t  \leq~&  (1-\gamma_t)\varepsilon_{t-1}  + \gamma_t^2\frac{LD^2_2}{2n} + {\color{brown}\underbrace{\gamma_t D_\infty H_t}_{\text{error term}}}\,. \label{eq:subopt_last}
\esp
\end{align}

\end{lemma}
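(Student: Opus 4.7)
The plan is to start from the quadratic upper bound \eqref{eq:key_ineq} applied with $\ww = \ww_t$ and $\vv = \ww_{t-1}$, and reduce the analysis to the classical Frank--Wolfe progress lemma plus a noise term that captures how far $\balpha_t$ is from the true gradient. Since $\ww_t - \ww_{t-1} = \gamma_t(\sss_t - \ww_{t-1})$, the quadratic term becomes $\gamma_t^2 \frac{L}{2n}\|\XX(\sss_t - \ww_{t-1})\|_2^2 \le \gamma_t^2 \frac{L D_2^2}{2n}$ by definition \eqref{eq:diam_p}, which is exactly the smoothness term appearing in \eqref{eq:subopt_last}. The remaining work is to show that the linear term $\gamma_t \langle \nabla f(\XX\ww_{t-1}), \XX(\sss_t - \ww_{t-1})\rangle$ contributes at most $-\gamma_t \varepsilon_{t-1}$ plus something of order $\gamma_t D_\infty H_t$.

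To do this I would insert $\balpha_t$ into the inner product and split it as
\[
\langle \nabla f(\XX\ww_{t-1}),\XX(\sss_t-\ww_{t-1})\rangle
 = \langle \balpha_t,\XX(\sss_t-\ww_{t-1})\rangle
 + \langle \nabla f(\XX\ww_{t-1})-\balpha_t,\XX(\sss_t-\ww_{t-1})\rangle .
\]
The second piece is bounded by H\"older and the $D_\infty$ diameter, yielding at most $D_\infty H_t$. For the first piece I would exploit the LMO optimality of $\sss_t$: since $\ww_\star \in \CC$, we have $\langle \balpha_t,\XX(\sss_t-\ww_\star)\rangle\le 0$, hence $\langle \balpha_t,\XX(\sss_t-\ww_{t-1})\rangle \le \langle \balpha_t,\XX(\ww_\star-\ww_{t-1})\rangle$. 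I would then again introduce the true gradient, bounding the discrepancy by H\"older (giving another $D_\infty H_t$ contribution) and invoking convexity of $f$ in the form $\langle \nabla f(\XX\ww_{t-1}), \XX(\ww_\star - \ww_{t-1})\rangle \le f(\XX\ww_\star) - f(\XX\ww_{t-1}) = -\varepsilon_{t-1}$.

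Putting these pieces together and subtracting $f(\XX\ww_\star)$ from both sides of the smoothness inequality gives
\[
\varepsilon_t \;\le\; (1-\gamma_t)\varepsilon_{t-1} \;+\; \gamma_t^2\,\frac{L D_2^2}{2n} \;+\; \gamma_t D_\infty H_t ,
\]
which is exactly \eqref{eq:subopt_last} (possibly after absorbing a constant factor in the noise term coming from the two separate H\"older applications).

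The main obstacle is not conceptual but bookkeeping: the inner products live partly in $\RR^n$ (through $\nabla f(\XX\ww_{t-1})$ and $\balpha_t$) and partly in $\RR^d$ (through $\sss_t - \ww_{t-1}$), so each splitting must be written as $\langle \cdot, \XX(\cdot)\rangle$, and the H\"older pairing must be $\ell_1$/$\ell_\infty$ on the $\RR^n$ side to match the definitions of $H_t$ and $D_\infty$. Care is needed so that the noise term appearing on the right is exactly $D_\infty H_t$ (up to a constant factor) and not, say, $D_2 H_2$, since only the former pairing is controlled by the algorithm's iterates and diameters.
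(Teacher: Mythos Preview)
Your approach is correct and is essentially the same as the paper's: smoothness, insert $\balpha_t$, use the LMO optimality to pass from $\sss_t$ to $\ww_\star$, and finish with convexity. The only difference is the constant in the error term. As you anticipated, your two separate H\"older applications produce $2\gamma_t D_\infty H_t$ rather than the stated $\gamma_t D_\infty H_t$. The paper avoids this factor of $2$ by a small rearrangement: in the term $\langle \nabla f(\XX\ww_{t-1})-\balpha_t,\XX(\sss_t-\ww_{t-1})\rangle$ it writes $\sss_t-\ww_{t-1}=(\sss_t-\ww_\star)+(\ww_\star-\ww_{t-1})$ \emph{before} applying H\"older. The $(\ww_\star-\ww_{t-1})$ part then combines exactly with the post-LMO term $\langle \balpha_t,\XX(\ww_\star-\ww_{t-1})\rangle$ to reconstitute $\langle \nabla f(\XX\ww_{t-1}),\XX(\ww_\star-\ww_{t-1})\rangle$, so H\"older is invoked only once, on $\langle \nabla f(\XX\ww_{t-1})-\balpha_t,\XX(\sss_t-\ww_\star)\rangle$. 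With that tweak your argument matches the paper's exactly and yields the lemma with the stated constant.
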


We defer this proof to Appendix~\ref{apx:sufficient_decrease}.

\begin{remark}
This lemma holds for \emph{any} direction $\balpha_t\in\RR^n$, not necessarily the $\balpha_t$ given by the SFW algorithm. 
\end{remark}

\begin{remark}
This lemma generalizes the key inequality used in many proofs in the Frank-Wolfe literature \cite{jaggi2013revisiting} but includes an extra {\color{brown} error term} to account for the fact that the direction  $\balpha_t$, which we use for the $\LMO$ step and therefore to compute the updated iterate, is \emph{not} the true gradient.
If $\balpha_t = \nabla f(\XX\ww_{t-1})$, that is, if we compute the gradient on the full dataset, then $H_t=0$ and we recover the standard quadratic upper bound.
\end{remark}

In the following, $\balpha_t$ is the direction given by Algorithm~\ref{alg:sfw}, and the $\ell_1$ error term is in terms of that $\balpha_t$: 

\begin{align}
    H_t \defas \|\balpha_t - \nabla f(\XX\ww_{t-1})\|_1
\end{align}
for $t > 0$ and $H_0 = \|\balpha_0 - \nabla f(\XX\ww_{0})\|_1$.

Notice that we define the gradient estimator's error with the $\ell_1$ norm. The previous lemma also holds with the $\ell_2$ norm of the gradient error (replacing $D_\infty$ by $D_2$). We prefer the $\ell_1$ norm because of the finite-sum assumption: it induces a coordinate-wise separation over $\balpha_t$ which corresponds to a datapoint-wise separation. The following lemma crucially leverages this assumption to upper bound $H_t$ given by the SFW algorithm.

\begin{lemma}\label{lemma:ht_upper_bound}
For the stochastic gradient estimator $\balpha_t$ given by Algorithm \ref{alg:sfw} (SFW), we can upper bound $H_t = {\|\balpha_t - \nabla f(\XX\ww_{t-1})\|_1}$ in conditional expectation as follows
\begin{align}
    \EE_t H_t \leq \left(1-\mfrac{1}{n}\right)\left(H_{t-1} + \gamma_{t-1}\frac{LD_1}{n}\right). \label{eq:ht_upper_bound}
\end{align}
 
\end{lemma}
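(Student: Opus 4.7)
My plan is to decompose $H_t$ coordinate-by-coordinate, use the update rule to handle the random index, and then use the triangle inequality plus the special smoothness property \eqref{norman} to relate the residual at $\ww_{t-1}$ back to $H_{t-1}$ (which measures the error at $\ww_{t-2}$).

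First, I would exploit the per-coordinate structure of the $\ell_1$ norm: $H_t = \sum_{j=1}^n |\balpha_t^{(j)} - [\nabla f(\XX\ww_{t-1})]_j|$. By the update in lines \ref{lst:line:sample}--\ref{lst:line:refresh_alpha} of Algorithm~\ref{alg:sfw}, conditional on the history up to step $t-1$, the sampled index $i$ zeroes out the $j=i$ term (since $\balpha_t^{(i)} = \frac{1}{n}f'_i(\xx_i\tran\ww_{t-1}) = [\nabla f(\XX\ww_{t-1})]_i$ exactly), while leaving every other coordinate untouched at its stale value $\balpha_{t-1}^{(j)}$. Taking expectation over $i$ uniform on $\{1,\ldots,n\}$, each coordinate $j$ survives with probability $(n-1)/n$, which gives
\begin{align*}
\EE_t H_t = \frac{n-1}{n}\,\|\balpha_{t-1} - \nabla f(\XX\ww_{t-1})\|_1.
\end{align*}

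Second, I would reconnect $\|\balpha_{t-1} - \nabla f(\XX\ww_{t-1})\|_1$ to $H_{t-1} = \|\balpha_{t-1} - \nabla f(\XX\ww_{t-2})\|_1$ via the triangle inequality:
\begin{align*}
\|\balpha_{t-1} - \nabla f(\XX\ww_{t-1})\|_1 \le H_{t-1} + \|\nabla f(\XX\ww_{t-2}) - \nabla f(\XX\ww_{t-1})\|_1.
\end{align*}
The second term is controlled by the non-standard smoothness inequality \eqref{norman} with $p=1$, yielding $\|\nabla f(\XX\ww_{t-2}) - \nabla f(\XX\ww_{t-1})\|_1 \le \frac{L}{n}\|\XX(\ww_{t-1}-\ww_{t-2})\|_1$. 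Finally, $\ww_{t-1} - \ww_{t-2} = \gamma_{t-1}(\sss_{t-1} - \ww_{t-2})$, and since both $\sss_{t-1}$ and $\ww_{t-2}$ lie in $\CC$, the definition \eqref{eq:diam_p} of $D_1$ gives $\|\XX(\sss_{t-1}-\ww_{t-2})\|_1 \le D_1$. Combining these pieces yields exactly the claimed bound $\EE_t H_t \le (1-1/n)(H_{t-1} + \gamma_{t-1} L D_1/n)$.

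I do not anticipate a serious obstacle: the only subtlety is being careful that $\balpha_{t-1}$ and $\ww_{t-2}$ are both measurable with respect to the history, so the only randomness at step $t$ is the index $i$. The $(1-1/n)$ factor must be factored out of both terms inside the parentheses, which is what the coordinate-wise argument naturally produces; one should verify that $\ww_{t-2}$ (not $\ww_{t-1}$) is the reference point in $H_{t-1}$, which is why the smoothness step introduces the factor $\gamma_{t-1}$ (rather than $\gamma_t$).
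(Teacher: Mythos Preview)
Your proposal is correct and follows essentially the same approach as the paper's proof: the coordinate-wise survival probability $(1-1/n)$ to get $\EE_t H_t = (1-1/n)\|\balpha_{t-1}-\nabla f(\XX\ww_{t-1})\|_1$, then the triangle inequality plus smoothness to pass from $\ww_{t-1}$ to $\ww_{t-2}$, and finally the step relation $\ww_{t-1}-\ww_{t-2}=\gamma_{t-1}(\sss_{t-1}-\ww_{t-2})$ with the $D_1$ diameter bound. The only cosmetic difference is that the paper expands the $\ell_1$ gradient difference into $\sum_j |f_j'(\cdot)-f_j'(\cdot)|$ and applies per-coordinate $L$-smoothness, whereas you invoke \eqref{norman} with $p=1$ directly; these are equivalent.
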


\begin{proof}
We have the following expression for $\balpha_t$, supposing that index $i$ was sampled at step $t$.

\begin{align}
    \balpha_t &= \balpha_{t-1} + \left(\frac{1}{n}f'_i(\xx_i\tran\ww_{t-1}) - \balpha_{t-1}^{(i)}\right)\ee_i
\end{align}
where $\ee_i$ is the $i$-th vector of the canonical basis of $\RR^n$.
Consider a fixed coordinate $j$. Since there is a $\frac{1}{n}$ chance of $\balpha_j$ being updated to $f'_j(\xx_j^\top \ww_{t-1})$, taking conditional expectations we have

\begin{align}
\EE_t H_t^{j} &\defas |\balpha_t^{(j)} - \frac{1}{n} f_j'(\xx_j^\top \ww_{t-1})| \\
&= \left(1 - \frac{1}{n} \right)|\balpha_{t-1}^{(j)} -  \frac{1}{n}f_j'(\xx_j^\top \ww_{t-1})|.
\end{align}

Summing over all coordinates we then have
\begin{align}
    \EE_t H_t &=  \sum_{j=1}^n \EE_t H_t^j \\
    &= \left(1-\frac{1}{n}\right) \underbrace{ \left\|\balpha_{t-1} - \nabla f(\XX \ww_{t-1})\right\|_1}_{\delta_{t-1}}.
\end{align}

\looseness=-1We denote the $\ell_1$ norm term by $\delta_{t-1}$ for ease.
Let us introduce the full gradient at the previous step $\nabla f(\XX\ww_{t-2})$ and use the triangle inequality. Our finite sum assumption gives us that for all $j\in \{1,\dots, n\}$ and $\ww\in \CC$, $[\nabla f(\XX\ww)]_j = \frac{1}{n} f_j'(\xx_j\tran\ww)$. Then, we separate the $\ell_1$ norm, use \mbox{$L$-smoothness} of each of the $f_j$s and the definition of $\ww_{t-1}$. 
\begin{align}
    \delta_{t-1} &\leq H_{t-1} + \|\nabla f(\XX\ww_{t-2}) -\nabla f(\XX\ww_{t-1}) \|_1
    \\
    &\leq H_{t-1} + \frac{L}{n}\sum_{j=1}^n|\xx_j\tran(\ww_{t-1} - \ww_{t-2})| \\
    &\leq H_{t-1} + \gamma_{t-1} \frac{L}{n}\sum_{j=1}^n |\xx_j\tran (\sss_{t-1} - \ww_{t-2})| \\
    &\leq H_{t-1} + \gamma_{t-1} \frac{L}{n} \|\XX(\sss_{t-1} - \ww_{t-2})\|_1
\end{align}
where we used $\ww_{t-1} - \ww_{t-2} = \gamma_{t-1}(\ww_{t-1} - \sss_{t-2})$. Finally, using the definition of the diameter $D_1$, we obtain inequality \eqref{eq:ht_upper_bound}.
\end{proof}
Now, we can use the structure of this recurrence to obtain the desired rate of convergence for our gradient estimator. We state this in the following lemma.

\begin{lemma}\label{lemma:ht_asymptotics}
Let $\gamma_t = \frac{2}{t+2}$. We have the following bound on the expected error $\EE H_t$, for $t\geq 2$: 

\begin{align}\label{rainy}
    \EE H_t \leq~& 2\mfrac{LD_1}{n}\left(\mfrac{2(n-1)}{t+2} + \left(1-\mfrac{1}{n}\right)^{t/2}\log t\right) \nonumber\\
    &+ \left(1-\mfrac{1}{n}\right)^t H_0.
\end{align}

\end{lemma}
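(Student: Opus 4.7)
The plan is to turn the one-step conditional bound of Lemma~\ref{lemma:ht_upper_bound} into a closed-form bound by unrolling and then carefully estimating the resulting geometric--harmonic sum. Let me write $\rho = 1 - 1/n$ and $C = LD_1/n$ throughout.

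First, I would apply the tower property to Lemma~\ref{lemma:ht_upper_bound} to get the unconditional linear recurrence
\begin{equation*}
\EE H_t \leq \rho\, \EE H_{t-1} + \rho\, \gamma_{t-1}\, C
\end{equation*}
for every $t \geq 1$. Unrolling this recurrence all the way down to $H_0$ and plugging in the chosen step sizes $\gamma_k = 2/(k+2)$ gives
\begin{equation*}
\EE H_t \;\leq\; \rho^t H_0 \;+\; 2C \sum_{k=0}^{t-1} \frac{\rho^{t-k}}{k+2}.
\end{equation*}
So the whole problem reduces to showing that the weighted sum $S_t \defas \sum_{k=0}^{t-1} \rho^{t-k}/(k+2)$ is bounded by $2(n-1)/(t+2) + \rho^{t/2}\log t$.

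To control $S_t$, the natural move is to split the sum at $m = \lceil t/2 \rceil$ into a ``tail'' part where $k$ is large and a ``head'' part where $k$ is small. In the tail part ($m \leq k \leq t-1$), the factor $1/(k+2)$ is small: bounding it by $1/(m+2) \leq 2/(t+2)$ and then summing the remaining geometric series in $\rho^{t-k}$ against the closed form $\rho/(1-\rho) = n-1$ yields a tail contribution of at most $2(n-1)/(t+2)$. In the head part ($0 \leq k < m$), the factor $\rho^{t-k}$ is small because $t-k \geq t - m + 1 \geq t/2$, so $\rho^{t-k} \leq \rho^{t/2}$; pulling this outside leaves the harmonic sum $\sum_{k=0}^{m-1} 1/(k+2) = H_{m+1} - 1 \leq \log(m+1) \leq \log t$, where the last inequality holds for $t \geq 2$ because $m+1 = \lceil t/2 \rceil + 1 \leq t$. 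This contributes $\rho^{t/2}\log t$. Combining the two pieces and multiplying by $2C = 2LD_1/n$ gives exactly the bound \eqref{rainy}.

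The argument has no deep obstacle; the one point requiring a little care is the split index choice, since the exponent $\rho^{t/2}$ in the target bound dictates taking $m \approx t/2$, and then one must check that the boundary case $t = 2,3$ still satisfies $\log(\lceil t/2\rceil + 1) \leq \log t$. Beyond this bookkeeping, everything is a routine combination of telescoping, geometric summation, and the elementary bound $H_n - 1 \leq \log n$.
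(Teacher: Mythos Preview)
Your proposal is correct and follows essentially the same approach as the paper: unroll the recurrence from Lemma~\ref{lemma:ht_upper_bound}, then split the resulting geometric--harmonic sum at roughly $t/2$, bounding the high-$k$ part via the geometric series $\rho/(1-\rho)=n-1$ and the low-$k$ part via $\rho^{t-k}\le\rho^{t/2}$ together with the harmonic estimate $\sum 1/(k+2)\le\log t$. The only cosmetic difference is that the paper phrases this for a general recurrence $u_t\le\rho(u_{t-1}+K/(t+1))$ and splits at $\lfloor t/2\rfloor$ rather than your $\lceil t/2\rceil$.
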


\begin{remark}
Our gradient estimator's error in $\ell_1$ norm goes to zero as $O\left(\frac{D_1}{t}\right)$. This rate depends on the assumption of the separability of $f$ into a finite sum of $L$-smooth $f_i$'s. On the other hand, it does not require that each (or any) $f_i$ be convex.
\end{remark}

\begin{proof}
Consider a general sequence of nonnegative numbers, $u_0, u_1, u_2,\ldots,u_t\in\RR_+$ where for all $t$, the following recurrence holds:

\begin{align}
    u_t \leq \rho\left(u_{t-1} + \frac{K}{t+1}\right)
\end{align}
where $0<\rho<1$ and $K>0$ are scalars.

First note that all the iterates are nonnegative.
Suppose $t\geq 2$,\begin{align*}
    u_t &\leq \rho^t u_0 + K\sum_{k=1}^t \frac{\rho^{t-k+1}}{k+1} \\
    & = \rho^t u_0 + K \left(\sum_{k=1}^{\floor{t/2}}\frac{\rho^{t-k+1}}{k+1} + \sum_{k=\lfloor{t/2}\rfloor +1}^t\frac{\rho^{t-k+1}}{k+1}\right) \\
    &\leq \rho^t u_0 + K \left( \sum_{k=1}^{\floor{t/2}} \frac{\rho^{{t/2}}}{k+1} +  \sum_{k=\floor{t/2}+1 }^t 2~\frac{\rho^{t-k+1}}{t+2} \right).
\end{align*}

To go from the second line to the third line, we observe that for ``old" terms with large steps sizes, we are saved by the higher power in the geometric term.
For the more recent terms, the step-size is small enough to ensure convergence. More formally, in the early terms ($1\leq k\leq \floor{t/2}$), we upper bound $\rho^{t-k+1}$ by $\rho^{{t/2}}$. In the later terms ($\floor{t/2}+1\leq k\leq t$), we upper bound $\frac{1}{k+1}$ by $\frac{2}{t + 2}$.

To obtain the full rate, we now study both parts separately. For the first part, we use knowledge of the harmonic series:

\begin{align}
    \rho^{{t/2}} \sum_{k=1}^{\floor{t/2}} \frac{1}{k+1} \leq \rho^{t/2}\log\left(\frac{t}{2} + 1\right)
\end{align}
for $t\geq 2$, we can upper bound $\log\left(\frac{t}{2}+1\right)$ by $\log t$.

For the second part, we use knowledge of the geometric series:

\begin{align}
    \sum_{k=\floor{t/2} +1}^t \rho^{t-k+1} &\leq \frac{\rho}{1-\rho}.
\end{align}

Finally, for $t\geq 2$
\begin{align}
    0\leq u_t \leq K \left(\frac{\rho}{(1-\rho)}~\frac{2}{(t+2)} + \rho^{t/2}\log t \right) +  \rho^t u_0.
\end{align}

The expected error $\EE H_t$ verifies our general conditions with $u_0 = H_0 = \|\balpha_0 - \nabla f(\XX\ww_{-1})\|_1$, defining $\ww_{-1} \defas \ww_0$ for the sake of the proof;  $\rho = 1 - \mfrac{1}{n}$ and $K=\mfrac{2LD_1}{n}$. Specifying these values gives us the claimed bound.
\end{proof}

The remainder of the proof of Theorem~\ref{theorem:convex_rate} follows the usual Frank-Wolfe proofs in the full gradient case, which can be found e.g. in \citet{frank1956algorithm, jaggi2013revisiting}.
 Here is a brief sketch of these steps: we tie the three key lemmas together, plugging in the bound on $\EE H_t$ given by Lemma \ref{lemma:ht_asymptotics} into the upper bound on the suboptimality at step $t$ given by Lemma \ref{lemma:sufficient_decrease}. By specifying the step size $2/(t+2)$, and scaling the bounds by a factor of $(t+1)(t+2)$, we obtain a telescopic sum, allowing us to upper bound the expected suboptimality at the latest step considered. The details are deferred to Appendix \ref{apx:tying_up}.

\subsection{Worst-case Convergence Rates for Smooth, Non-Convex Objectives}
\label{sec:convergence_gap}
We start by recalling the definition of the Frank-Wolfe gap:
\begin{align}
    g_t &= \max_{\sss \in \CC} \langle \nabla f(\XX\ww_{t-1}), \XX(\ww_{t -1} - \sss)\rangle.
\end{align}

Previous work \cite{jaggi2013revisiting} has shown the importance of the Frank-Wolfe gap. In the convex setting, it is a primal-dual gap, and as such, upper bounds both primal and dual suboptimalities. In the general non-convex setting, it is a measure of near-stationarity. We define a stationary point as any point $\ww_\star$  such that for all $\ww\in \CC$,  ${\langle \nabla f(\XX\ww_\star), \XX(\ww - \ww_\star)\rangle} \geq 0$~\citep{bertsekas1999nonlinear}. From this definition, it is clear that the Frank-Wolfe gap $g_t$ is zero only at a stationary point.

In this section, we suppose that $f_i$ is $L$-smooth for $i$ in $\{1,\dots, n\}$, but not necessarily convex. The following theorem states that we can still obtain a stationary point from Algorithm \ref{alg:sfw}.

\begin{theorem}
\label{thm:nonconvex_rate}
Let $\ww_t$ be computed according to Algorithm~\ref{alg:sfw}, then 
\begin{align}
    \liminf_{t\to\infty} \,\EE_t g_t = 0,
\end{align}

where $g_t$ is the Frank-Wolfe gap.

\end{theorem}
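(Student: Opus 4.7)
The plan is to derive a descent-type inequality in which the Frank-Wolfe gap $g_t$ appears as the dominant negative term, then sum this inequality across iterations and exploit that, by Lemma~\ref{lemma:ht_asymptotics}, the gradient-estimator error $H_t$ vanishes fast enough to keep the cumulative noise finite. Concretely, I would start from the smoothness inequality \eqref{eq:key_ineq} applied to the update $\ww_t = \ww_{t-1} + \gamma_t(\sss_t - \ww_{t-1})$, giving
\begin{equation*}
  f(\XX\ww_t) \leq f(\XX\ww_{t-1}) + \gamma_t \langle \nabla f(\XX\ww_{t-1}), \XX(\sss_t - \ww_{t-1})\rangle + \frac{L\gamma_t^2 D_2^2}{2n}.
\end{equation*}
Note that convexity of $f$ is not used here, so this is available in the non-convex setting.

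Next, let $\sss_t^\star \in \argmin_{\sss\in\CC}\langle \sss, \XX\tran \nabla f(\XX\ww_{t-1})\rangle$ be the \emph{true} FW vertex, so $g_t = \langle \nabla f(\XX\ww_{t-1}), \XX(\ww_{t-1} - \sss_t^\star)\rangle$. Since $\sss_t = \LMO(\XX\tran\balpha_t)$ only minimizes against the noisy direction $\balpha_t$, I would add and subtract $\balpha_t$, use the LMO optimality $\langle \balpha_t, \sss_t\rangle \leq \langle \balpha_t, \sss_t^\star\rangle$, and collect terms to obtain
\begin{equation*}
  \langle \nabla f(\XX\ww_{t-1}), \XX(\sss_t - \ww_{t-1})\rangle \leq -g_t + \langle \nabla f(\XX\ww_{t-1}) - \balpha_t, \XX(\sss_t - \sss_t^\star)\rangle.
\end{equation*}
Hölder's inequality together with the definition of $D_\infty$ in \eqref{eq:diam_p} then bounds the cross term by $H_t D_\infty$. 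Combining with the smoothness bound and rearranging yields the key descent inequality
\begin{equation*}
  \gamma_t g_t \leq f(\XX\ww_{t-1}) - f(\XX\ww_t) + \gamma_t D_\infty H_t + \frac{L\gamma_t^2 D_2^2}{2n}.
\end{equation*}

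Taking full expectations, summing from $t=1$ to $T$, and telescoping the function values gives
\begin{equation*}
  \sum_{t=1}^T \gamma_t \,\EE g_t \leq f(\XX\ww_0) - \EE f(\XX\ww_T) + D_\infty \sum_{t=1}^T \gamma_t \,\EE H_t + \frac{L D_2^2}{2n}\sum_{t=1}^T \gamma_t^2.
\end{equation*}
Since $\CC$ is compact and $f$ is continuous, $f\circ\XX$ is bounded below on $\CC$, so the telescoped difference is bounded by a finite constant. With $\gamma_t = 2/(t+2)$, we have $\sum_t \gamma_t^2 < \infty$, and Lemma~\ref{lemma:ht_asymptotics} gives $\EE H_t = O(1/t)$ plus exponentially small terms, so $\sum_t \gamma_t \,\EE H_t < \infty$ as well. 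Letting $T \to \infty$ therefore gives $\sum_{t\geq 1} \gamma_t \,\EE g_t < \infty$. Since $g_t \geq 0$ and $\sum_t \gamma_t = \infty$, this forces $\liminf_{t\to\infty} \EE g_t = 0$, which is the claim.

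The main obstacle is the LMO-decomposition step: because $\sss_t$ minimizes against the noisy $\balpha_t$ rather than the true gradient, one must introduce the counterfactual vertex $\sss_t^\star$ and cancel the $\ww_{t-1}$-dependent parts so that the remaining noise is a pure vertex-difference controlled by $D_\infty$ rather than by an uncontrolled $\|\XX \ww_{t-1}\|_\infty$ term. Once that algebraic manipulation is correct, everything else is the standard Zangwill/Robbins-Siegmund-style argument for convergence to stationarity along a divergent step-size sequence.
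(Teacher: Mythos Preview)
Your proof is correct and follows essentially the same approach as the paper: smoothness bound, Hölder to control the noise term by $D_\infty H_t$, telescoping sum, summability of $\gamma_t \EE H_t$ and $\gamma_t^2$ via Lemma~\ref{lemma:ht_asymptotics}, and divergence of $\sum_t \gamma_t$. The only organizational difference is that the paper first derives the descent inequality in terms of the \emph{stochastic} gap $\hat g_t$ (with error term $\langle \nabla f - \balpha_t, \XX(\sss_t - \ww_{t-1})\rangle$) and then invokes Proposition~\ref{prop:proxy_gap_informal} to pass from $\hat g_t$ to $g_t$, whereas you introduce the true FW vertex $\sss_t^\star$ so that the $\ww_{t-1}$ contributions cancel and $g_t$ appears directly; your route is slightly more economical since it bypasses the need for Proposition~\ref{prop:proxy_gap_informal}.
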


The proof of this result is deferred to Appendix \ref{apx:proof_non_convex}.

\section{Stopping Criterion}
In this section, we define a natural stochastic Frank-Wolfe gap, and explain why it can be used as a stopping criterion.

We recall the definition of the true Frank-Wolfe gap $g_t$, and define the stochastic Frank-Wolfe gap $\hat g_t$ as:
\begin{align}
    g_t &= \max_{\sss \in \CC} \langle \nabla f(\XX\ww_{t-1}), \XX(\ww_{t -1} - \sss)\rangle ,\\
    \hat g_t&= \max_{\sss \in \CC} \langle \balpha_t, \XX(\ww_{t -1} - \sss)\rangle 
\end{align}

for $\balpha_t$ given by SFW.

The Frank-Wolfe gap's properties make estimating it very desirable: when the gap is small for a given iteration of a Frank-Wolfe type algorithm, we can guarantee we are close to optimum (or to a stationary point in the general non-convex case).
Unfortunately, in datasets with many samples, and since it depends on the full gradient, computing this gap can be impractical. 

The following proposition shows that the stochastic Frank-Wolfe gap estimator resulting from Algorithm \ref{alg:sfw} can be used as a proxy for the true Frank-Wolfe gap.

\begin{proposition}
\label{prop:proxy_gap_informal}
For $\balpha_t$ given by Algorithm \ref{alg:sfw}, we can bound the distance between the stochastic Frank-Wolfe gap and the true Frank-Wolfe gap as follows: 

\begin{align}
    |g_t - \hat{g}_t | \leq D_\infty H_t,
\end{align}

which yields the following bound in expectation

\begin{align}
    \EE |g_t - \hat g_t| \leq~& 2\mfrac{L D_1 D_\infty}{n}\left(\mfrac{2(n-1)}{t+2} + \left(1-\mfrac{1}{n}\right)^{t/2}\log t\right) \nonumber\\
    &+ \left(1-\mfrac{1}{n}\right)^t D_\infty H_0.
\end{align}
\end{proposition}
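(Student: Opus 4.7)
The plan is to reduce the bound on $|g_t - \hat g_t|$ to a bound on the $\ell_1$ error $H_t$ of the gradient estimator, and then apply Lemma~\ref{lemma:ht_asymptotics} to get the expectation bound. The first (deterministic) inequality is the main content; the expectation bound is then immediate.

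First I would use the elementary fact that for two functions $\phi, \psi$ maximized over a common set, $|\max_\sss \phi(\sss) - \max_\sss \psi(\sss)| \leq \max_\sss |\phi(\sss) - \psi(\sss)|$. Applying this to the definitions of $g_t$ and $\hat g_t$ (which share the same maximization domain $\mathcal{C}$ and the same vector $\XX(\ww_{t-1}-\sss)$ in the inner product) gives
\begin{align*}
|g_t - \hat g_t| \leq \max_{\sss\in\CC} \bigl|\langle \nabla f(\XX\ww_{t-1}) - \balpha_t,\, \XX(\ww_{t-1} - \sss)\rangle\bigr|.
\end{align*}

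Next I would apply Hölder's inequality pairing $\ell_1$ with $\ell_\infty$: the first factor is controlled by $\|\nabla f(\XX\ww_{t-1}) - \balpha_t\|_1 = H_t$, and the second factor is controlled by $\|\XX(\ww_{t-1} - \sss)\|_\infty \leq D_\infty$ since both $\ww_{t-1}$ and $\sss$ lie in $\CC$ (this is exactly the definition \eqref{eq:diam_p} of $D_\infty$). Combining yields the claimed deterministic bound
\begin{align*}
|g_t - \hat g_t| \leq D_\infty H_t.
\end{align*}
The choice of the $\ell_1/\ell_\infty$ pairing (rather than $\ell_2/\ell_2$) is what matches the norm in which we already have asymptotics for the gradient estimator error, so the Hölder step should be viewed as the key modeling choice rather than as a technical obstacle.

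Finally, I would take expectations on both sides and invoke Lemma~\ref{lemma:ht_asymptotics}, which gives exactly the upper bound on $\EE H_t$ needed. Multiplying that bound by $D_\infty$ produces the displayed expected bound in the proposition statement. There is no real obstacle here; the only thing to be careful about is that Lemma~\ref{lemma:ht_asymptotics} is stated for $t\geq 2$, so the expectation bound should be understood for $t\geq 2$ (or else one absorbs small-$t$ terms into the constant involving $H_0$).
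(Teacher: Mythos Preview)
Your proposal is correct and follows essentially the same approach as the paper: both reduce $|g_t-\hat g_t|$ to H\"older's inequality with the $\ell_1/\ell_\infty$ pairing, yielding the $D_\infty H_t$ bound, and then invoke Lemma~\ref{lemma:ht_asymptotics} for the expectation. The only cosmetic difference is that you invoke the general fact $|\max_\sss\phi-\max_\sss\psi|\le\max_\sss|\phi-\psi|$ directly, whereas the paper unpacks this by introducing the two maximizers, adding and subtracting $\balpha_t$, using optimality, and then appealing to symmetry---but this is the same argument in slightly different packaging.
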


We defer the proof to Appendix \ref{apx:proof_proxy_gap}.

If $\hat{g}_t$ goes to 0, then the true Frank-Wolfe gap will be expected to vanish as well. We therefore propose to use $\hat{g}_t$, which is computed as a byproduct of our SFW algorithm, as a heuristic stopping criterion, but defer a more in-depth theoretical and empirical analysis of this gap to future work.

\section{Discussion}
\label{sec:discussion}
\begin{table*}[ht]
\caption{Datasets and tasks used in experiments.}
\label{tab:datasets}
\vskip 0.15in
\begin{center}
\begin{small}
\begin{sc}
\begin{tabular}{lccccc}
\toprule
Dataset & $n$ & $d$ & $\kappa / n$ & $f_i$ & $\CC$ \\
\midrule
\href{https://archive.ics.uci.edu/ml/datasets/Breast+Cancer+Wisconsin+(Diagnostic)}{Breast Cancer} & 683 & 10 & 0.929 & $\log(1 + \exp(-\yy_i \xx_i^\top \ww))$ & $\{\|\ww\|_1\leq \lambda ,~\lambda = 5\}$  \\
\href{http://jmlr.csail.mit.edu/papers/volume5/lewis04a/}{RCV1} & 20,242 & 47,236 & 0.021 & $\log(1 + \exp(-\yy_i \xx_i^\top \ww))$ & $\{\|\ww\|_1\leq \lambda,~\lambda = 100\}$  \\
\href{http://lib.stat.cmu.edu/datasets/houses.zip}{California Housing} & 20,640 & 8 & 0.040 & $\tfrac{1}{2}(\yy_i - \xx_i^\top \ww)^2$ & $\{\|\ww\|_1 \le \lambda, \lambda=0.1\}$ \\
\bottomrule
\end{tabular}
\end{sc}
\end{small}
\end{center}
\vskip -0.2in
\end{table*}
\begin{figure*}[ht]
    \centering
    \includegraphics[width=\linewidth]{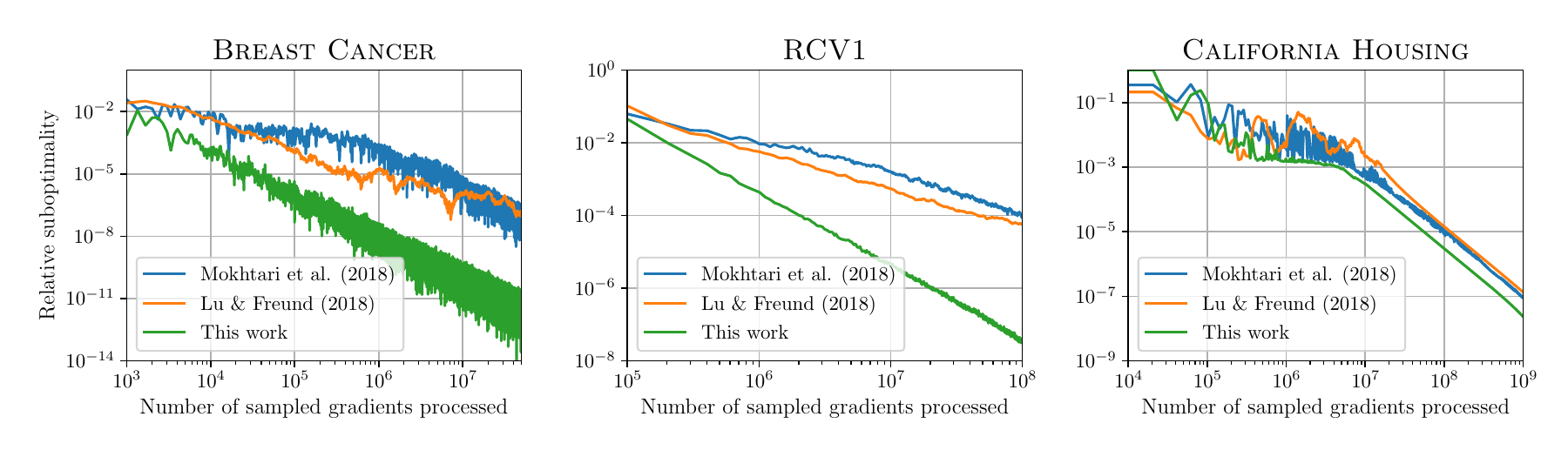}
    \caption{Comparing our SFW method to the related works of \citet{lu2018generalized} and \citet{mokhtari2018stochastic}. From left to right: \textsc{Breast Cancer}, \textsc{RCV1}, and \textsc{California Housing} datasets. We plot the relative subtimality values in $\log$-$\log$ plots to show empirical rates of convergence. We use the following batch size: $b=\floor{n/100}$.} 
    \label{fig:experiment-main}
\end{figure*}

In this section, we compare the convergence rate of the proposed SFW, \citet{lu2018generalized} and \citet{mokhtari2018stochastic} as shown in Table~\ref{tab:rates}. We use big $\mathcal O$ notation, only focusing on dependencies in $n$ and $t$ to upper bound the suboptimality at step $t$.

To make a fair comparison, including dependencies in $n$, the number of samples, we first standardize notations across papers. \citet{lu2018generalized} use the same formal setting as ours, where $\xx_i\tran\ww$ is the argument to the $i$-th objective $f_i$, and the full objective is the average of these. \citet{mokhtari2018stochastic} set themselves in a more general setting, where they only assume access to an \emph{unbiased estimator} of the full gradient. 

For ease of comparison, we rewrite the two algorithms of \citet{lu2018generalized} and \citet{mokhtari2018stochastic} in Appendix~\ref{apx:comparison_other_methods} using our notations.

Because of their more general setting, the $L_{\text{mok}}$ Lipschitz constant appearing in \citet{mokhtari2018stochastic} can be written $L_{\text{mok}} = \frac{L}{n}n\max_i\|\xx_i\|_2$ (using Cauchy-Schwartz). Their diameter constant $D_\text{mok} = \max_{\uu, \vv\in \CC}\|\uu-\vv\|_2$  is also independent of $n$. Finally, their $\sigma^2$ term controlling the variance of their stochastic estimator should also be $n$-independent.
Under this notation, their convergence rate (Theorem 3, \citet{mokhtari2018stochastic}) is $\mathcal{O}\left(1/\sqrt[3]{t}\right)$ with no dependency in $n$ as expected.

\citet{lu2018generalized} have a detailed discussion of the rate of their method, and achieve the overall rate of $\mathcal{O}\left({n}/{t}\right)$. 

To fairly compare these rates to the one given by Theorem~\ref{theorem:convex_rate}, we must consider the $D_1$ and $D_\infty$ terms, which may depend on the number of samples $n$. The rate we obtain has a leading term of $\mathcal{O}\left({D_1 D_\infty}/{t}\right)$, and a second term of $\mathcal{O}\left({D_1D_\infty n^2}/{t^2}\right)$. 
The second term is dominated by the first in the regime $t > n^2$.
Defining $\kappa = D_1/D_\infty$, we can write $D_1 D_\infty$ as $\kappa D_\infty^2$. We have that $\kappa \leq n$, meaning that in the worst case, this bound matches the one in \citet{lu2018generalized}. When the constraint set is the $\ell_1$ ball $\{\ww\,|\, \|\ww\|_1 \leq \lambda\}$, we have the following closed form expression:

\begin{align}
    \kappa = \frac{\|X\|_{1,1}}{\|X\|_{1, \infty}} = \frac{\max_j \sum_{i=1}^n |X_{ij}|}{\max_{ij}|X_{ij}|}.
\end{align}
We can therefore easily compute it for given datasets. 
\begin{remark}
We briefly remark that if for every feature, the contribution of that feature is limited to a few datapoints, this ratio will be small, and therefore the overall bound does not depend on the number of samples. This tends to happen for TF-IDF text representations, and for fat-tailed data.
\end{remark}
Formal analysis of this ratio exceeds the scope of this paper, and we defer it to future work. We report values of $\kappa$ for the considered datasets in Section~\ref{sec:experiments}.

\section{Implementation Details}
Our implementation is available in the C-OPT package.\footnote{\url{https://github.com/openopt/copt}}

\textbf{Initialization.} We use the cheapest possible initialization: our initial stochastic gradient estimator $\balpha_0$ starts out at $0$. We also then have that $\rr_0=0$.

\textbf{Sparsity in $\XX$.} Suppose there are at most $s$ non-zero features for any datapoint $\xx_i$. Then for instances where $\cal C$ is an $\ell_1$ ball, all updates in SFW algorithm can be implemented using using only the support of the current datapoint, making the per-iteration cost of SFW $\mathcal{O}(s)$ instead of $\mathcal{O}(d)$. Large-scale datasets are often extremely sparse, so leveraging this sparsity is crucial. For example, in the LibSVM datasets suite, 8 out of the 11 datasets with more than a million samples have a density between $10^{-4}$ and $10^{-6}$. 

\section{Experiments}
\label{sec:experiments}

We compare the proposed SFW algorithm with other constant batch size algorithms from \citet{mokhtari2018stochastic} and \citet{lu2018generalized}.

\paragraph{Experimental Setting.} We consider $\ell_1$ constrained logistic regression problems on the \textsc{Breast Cancer} and \textsc{RCV1} datasets, and an $\ell_1$ constrained least squares regression problem on the \textsc{California Housing} dataset, all from the UCI dataset repository \cite{ucidatasets}. See Table \ref{tab:datasets} for details and links. 

We compare the relative suboptimality computed for each method, given by $(f(\XX\ww_t) - f_{\min})/(f_{\max} - f_{\min})$ at step $t$, where $f_{\min}$ and $f_{\max}$ are the smallest and largest function values encountered by any of the compared methods. We compute these values at different time intervals (the same for each method) depending on problem size, to limit the time of each run.
We use batches using $1\%$ of the dataset at each step, following \citet{lu2018generalized}. Within a batch, data points are sampled without replacement.

We plot these values as a function of the number of gradient evaluations, equal to the number of iterations times the batch size $b$: for all of the considered methods, an iteration involves exactly $b$ gradient evaluations and one call to the LMO. This allows us to fairly compare the convergence speeds in practice.

Compared to both methods from \citet{mokhtari2018stochastic} and \citet{lu2018generalized}, the proposed SFW achieves lower suboptimality for a given number of iterations on the considered tasks and datasets. We have no explanation for the initial regime in the \textsc{California Housing} dataset, before the methods start showing what resembles a sublinear rate, as the theory prescribes. Notice that the \textsc{RCV1} dataset has the lowest $\kappa/n$ (due to sparsity of the TF-IDF represented data), and that the method presented in this paper performs particularly well on this dataset.

{\bfseries Comparison with \citet{mokhtari2018stochastic}.} Although the step-size in our SFW Algorithm and the one proposed in the paper are of the same order of magnitude $\mathcal{O}(1/t)$, \citet{mokhtari2018stochastic} use $f'_i(\xx_i\tran\ww_{t-1})$ instead of our $(1/n)f'_i(\xx_i\tran\ww_{t-1})$, because they require an unbiased estimator. Their choice induces higher variance, which then requires the algorithm to use momentum with a vanishing step size in their stochastic gradient estimator, damping the contributions of the later gradients (using $\rho_t=\frac{1}{t^{2/3}}$, see the pseudo code in Appendix~\ref{apx:comparison_other_methods}). This may explain why the method proposed in \citet{mokhtari2018stochastic} achieves slower convergence. On the contrary, the lower variance in our estimator $\balpha_t$ allows us to give the same weight to contributions of later gradients as to previous ones, and to forget all but the last gradient computed at a given datapoint.

{\bfseries Comparison with \citet{lu2018generalized}.} The method from \citet{lu2018generalized} computes the gradient at an averaged iterate, putting more weight on earlier iterates, making it more conservative. This may explain slower convergence versus the SFW algorithm proposed in this paper in certain settings.

\section{Conclusion and Future Work}

Similarly to methods from the Variance Reduction literature such as SAG, SAGA, SDCA, we propose a Stochastic Frank Wolfe algorithm tailored to the finite-sum setting. Our method achieves a step towards attaining comparable complexity iteration-wise to deterministic, true-gradient Frank-Wolfe in the smooth, convex setting, at a per-iteration cost which can be nearly independent of the number of samples in the dataset in favorable settings. Our rate of convergence depends on the norm ratio $\kappa$ on the dataset, which is related to a measure of the weights of the data distribution's tails. We will explore this intriguing fact in future work.

We propose a stochastic Frank-Wolfe gap estimator, which may be used as a heuristic stopping criterion, including in the non-convex setting. Its distance to the true gap may be difficult to evaluate numerically. Obtaining a practical bound on this distance is an interesting avenue for future work.

\citet{guelat1986some} and \citet{lacoste2015global} have proposed variants of the FW algorithm that converge linearly on polytope constraint sets for strongly convex objectives: the Away Steps Frank-Wolfe and the Pairwise Frank-Wolfe. \citet{pmlr-v54-goldfarb17a} studied stochastic versions of these and showed linear convergence over polytopes using increasing batch sizes. Our SFW algorithm, the natural stochastic gap and the analyses in this paper should be amenable to such variants as well, which we plan to explore in future work. 

\section*{Acknowledgments}

The authors would like to thank Donald Goldfarb for early encouragement in this direction of research, and Armin Askari, Sara Fridovich-Keil, Yana Hasson, Thomas Kerdreux, Nicolas Le Roux, Romain Lopez, Gr\'egoire Mialon, Courtney Paquette, Hector Roux de B\'ezieux, Alice Schoenauer-Sebag, Dhruv Sharma, Yi Sun, and Nilesh Tripuraneni for their constructive criticism on drafts of this paper. The authors also warmly thank Maria-Luiza Vladareanu for finding and reporting an error in an earlier draft's proof, and Alex Belloni, Jose Moran for discussions as well.

Francesco Locatello is supported by the Max Planck ETH Center for Learning Systems, by an ETH core grant (to Gunnar R\"atsch), and by a Google Ph.D. Fellowship.  Robert Freund’s research is supported by AFOSR Grant No. FA9550-19-1-0240.

\bibliography{biblio}
\bibliographystyle{icml2020}

\appendix
\onecolumn

\section{Smoothness}
\label{apx:proof_smoothness}

\begin{proposition}\label{prop:smoothness}
Let $f: \RR^n \to \RR$ be defined as $f(\btheta) = \frac{1}{n}\sum_i f_i(\btheta_i)$. If  $f_i$ is $L$-smooth for all $i\in \{1, \dots, n\}$, then $f$ satisfies \eqref{norman}  for every $p \in [1,\infty]$.
\end{proposition}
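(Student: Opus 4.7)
The plan is to reduce the claim to a coordinate-wise bound, then invoke monotonicity of the $\ell_p$ norm with respect to coordinate-wise magnitudes. The structure of $f$ is especially favorable: because $f(\btheta) = \frac{1}{n}\sum_i f_i(\btheta_i)$ depends on each coordinate through only one summand, the gradient splits as
\begin{equation*}
[\nabla f(\btheta)]_i = \tfrac{1}{n} f_i'(\btheta_i),
\end{equation*}
so the difference of gradients at $\btheta$ and $\bar\btheta$ also decouples coordinate-wise.

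First I would apply the one-dimensional $L$-smoothness of each $f_i$ to bound each coordinate of the difference:
\begin{equation*}
\bigl|[\nabla f(\btheta) - \nabla f(\bar\btheta)]_i\bigr| = \tfrac{1}{n}|f_i'(\btheta_i) - f_i'(\bar\btheta_i)| \le \tfrac{L}{n}|\btheta_i - \bar\btheta_i|.
\end{equation*}
This produces, for every index $i$, a coordinate-wise domination of the vector $\nabla f(\btheta) - \nabla f(\bar\btheta)$ by the vector $\tfrac{L}{n}(\btheta - \bar\btheta)$ in absolute value.

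Then I would finish by the elementary fact that the $\ell_p$ norm is monotone under coordinate-wise domination in absolute value, for every $p \in [1,\infty]$: if $|u_i| \le |v_i|$ for all $i$, then $\|\uu\|_p \le \|\vv\|_p$. Applied to $\uu = \nabla f(\btheta) - \nabla f(\bar\btheta)$ and $\vv = \tfrac{L}{n}(\btheta - \bar\btheta)$, this immediately yields \eqref{norman}. There is essentially no obstacle: the finite-sum-with-separable-arguments structure makes everything coordinate-separable, and the main subtlety is just noting that the same norm appears on both sides precisely because separability lets us avoid any norm-change inequality (such as Cauchy–Schwarz or Hölder) that would otherwise couple different coordinates and force a duality between $\ell_p$ and $\ell_{p^*}$.
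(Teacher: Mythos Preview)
Your proposal is correct and mirrors the paper's own proof: both compute $[\nabla f(\btheta)]_i = \tfrac{1}{n} f_i'(\btheta_i)$, apply the one-dimensional $L$-smoothness of each $f_i$ coordinate-wise, and then pass to the $\ell_p$ norm (the paper writes the $p$-th root of the sum explicitly, while you phrase the same step as monotonicity of $\|\cdot\|_p$ under coordinate-wise domination). There is no substantive difference.
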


\begin{proof}

We observe that the $i$-th component of the gradient of $f$ is 

\begin{align}
    [\nabla f(\btheta)]_i = \frac{1}{n}f_i'(\btheta_i).
\end{align}

Recall that $|f'_i(\btheta_i) - f'_i(\bar\btheta_i)| \leq L|\btheta_i-\bar\btheta_i|$ for all $\btheta_i,~ \bar\btheta_i$ in the domain of $f_i$. Then, for the $\ell_p$ norm $\|\cdot\|_p$ and for all $\btheta,~\bar\btheta$ in the domain of $f$, the following holds

\begin{align}
    \|\nabla f(\btheta) - \nabla f(\bar\btheta)\|_p = \frac{1}{n}\sqrt[p]{\sum_{i=1}^n |f'_i(\btheta_i) - f'_i(\bar\btheta_i)|^p} \leq \frac{L}{n}\sqrt[p]{\sum_{i=1}^n |\btheta_i - \bar\btheta_i|^p} = \frac{L}{n} \|\btheta - \bar\btheta\|_p \ .
\end{align}

\end{proof}
\clearpage
\section{Proof of Lemma \ref{lemma:sufficient_decrease}}
\label{apx:sufficient_decrease}
We adapt \cite{mokhtari2018stochastic}'s proof of Lemma \ref{lemma:sufficient_decrease}. For ease, we reproduce its statement first.

\begin{lemma}\label{evelyn}
Suppose $f$ is a convex function and is $(L/n)$-smooth with respect to the $\ell_2$ norm. For \emph{any} direction $\balpha\in \RR^n$, defining $\sss_t = \LMO(\XX\tran \balpha)$ and $\ww_t = (1-\gamma_t)\ww_{t-1} + \gamma_t \sss_t$, we have the following upper bound on the primal suboptimality
\begin{align}
\bsp
    \varepsilon_t  \leq~&  (1-\gamma_t)\varepsilon_{t-1} + \gamma_t^2\frac{LD^2_2}{2n}+ \gamma_t D_\infty H_t,
\esp
\end{align}
where $\varepsilon_t = f(\XX\ww_t) - f(\XX\ww_\star)$.
\end{lemma}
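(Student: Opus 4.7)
The plan is to start from the smoothness descent inequality \eqref{eq:key_ineq} applied with $\ww = \ww_t$ and $\vv = \ww_{t-1}$. Since $\XX(\ww_t - \ww_{t-1}) = \gamma_t\XX(\sss_t - \ww_{t-1})$ and $\sss_t, \ww_{t-1} \in \CC$ so $\|\XX(\sss_t - \ww_{t-1})\|_2 \le D_2$, this immediately gives
\begin{equation*}
f(\XX\ww_t) \;\le\; f(\XX\ww_{t-1}) + \gamma_t \langle \nabla f(\XX\ww_{t-1}),\, \XX(\sss_t - \ww_{t-1})\rangle + \frac{\gamma_t^2 L D_2^2}{2n}.
\end{equation*}
Subtracting $f(\XX\ww_\star)$ from both sides, the task reduces to showing
\begin{equation*}
\langle \nabla f(\XX\ww_{t-1}),\, \XX(\sss_t - \ww_{t-1})\rangle \;\le\; -\varepsilon_{t-1} + D_\infty H_t.
\end{equation*}

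The key step is that $\sss_t$ is chosen by the LMO with direction $\XX^\top\balpha$, not $\XX^\top\nabla f(\XX\ww_{t-1})$, so we cannot directly apply the usual FW chain. I would therefore add and subtract $\balpha$, then split into two pieces: the ``LMO-aligned'' piece $\langle \balpha, \XX(\sss_t - \ww_{t-1})\rangle$ and the residual $\langle \nabla f(\XX\ww_{t-1}) - \balpha,\, \XX(\sss_t - \ww_{t-1})\rangle$. For the first piece, since $\ww_\star \in \CC$ the LMO definition gives $\langle \balpha, \XX\sss_t\rangle \le \langle \balpha, \XX\ww_\star\rangle$, so
\begin{equation*}
\langle \balpha, \XX(\sss_t - \ww_{t-1})\rangle \;\le\; \langle \balpha, \XX(\ww_\star - \ww_{t-1})\rangle.
\end{equation*}
I would then add and subtract $\nabla f(\XX\ww_{t-1})$ once more, using convexity to bound $\langle \nabla f(\XX\ww_{t-1}), \XX(\ww_\star - \ww_{t-1})\rangle \le f(\XX\ww_\star) - f(\XX\ww_{t-1}) = -\varepsilon_{t-1}$.

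Recombining, the two residual inner products consolidate into $\langle \balpha - \nabla f(\XX\ww_{t-1}),\, \XX(\ww_\star - \sss_t)\rangle$, which I bound using H\"older with $\ell_1/\ell_\infty$ duality:
\begin{equation*}
\langle \balpha - \nabla f(\XX\ww_{t-1}),\, \XX(\ww_\star - \sss_t)\rangle \;\le\; \|\balpha - \nabla f(\XX\ww_{t-1})\|_1 \cdot \|\XX(\ww_\star - \sss_t)\|_\infty \;\le\; H_t D_\infty,
\end{equation*}
where the last step uses $\ww_\star, \sss_t \in \CC$ and the definition \eqref{eq:diam_p}. Multiplying the resulting bound by $\gamma_t$ and plugging back into the smoothness inequality produces exactly \eqref{eq:subopt_last}.

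There is no serious obstacle here — the whole proof is a short manipulation. The only real design decision is to pair $H_t$ (in $\ell_1$) with $D_\infty$ via H\"older rather than use an $\ell_2/\ell_2$ split (which would give $D_2$ instead of $D_\infty$). This choice is deliberate because the coordinate-separable structure of $\balpha_t$ in Algorithm~\ref{alg:sfw} only yields the sharp recursion of Lemma~\ref{lemma:ht_upper_bound} in the $\ell_1$ norm. Another subtle point worth flagging is the ``cross'' decomposition: going through $\ww_\star - \sss_t$ rather than $\sss_t - \ww_{t-1}$ is what lets the LMO optimality and convexity of $f$ cooperate, and it is the reason the error enters linearly in $H_t$ rather than quadratically.
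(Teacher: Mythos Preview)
Your proof is correct and essentially identical to the paper's: both start from the smoothness descent inequality, add and subtract $\balpha$, use LMO optimality to replace $\sss_t$ by $\ww_\star$, then apply convexity and H\"older's $\ell_1/\ell_\infty$ inequality so that the residual collapses to $\langle \nabla f(\XX\ww_{t-1}) - \balpha,\, \XX(\sss_t - \ww_\star)\rangle \le D_\infty H_t$. The only cosmetic difference is that you subtract $f(\XX\ww_\star)$ earlier and consolidate the two residual terms explicitly into $\XX(\ww_\star - \sss_t)$, whereas the paper keeps them in line and simplifies at the ``rewrite'' step --- the algebra and the final bound are the same.
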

\begin{proof}

Recall the definition of $D_p =  \max_{\ww,~\vv \in \CC} \|\XX(\ww - \vv)\|_p$. 

\begin{align}
    f(\XX\ww_t) \leq& f(\XX\ww_{t-1}) + \langle \nabla f(\XX\ww_{t-1}), \XX(\ww_{t} - \ww_{t-1})\rangle + \frac{L}{2n} \|\XX(\ww_t - \ww_{t-1})\|_2^2 &\text{($(L/n)$-smoothness)} \\
    f(\XX\ww_t) \leq& f(\XX\ww_{t-1}) + \gamma_t\langle \nabla f(\XX\ww_{t-1}), \XX(\sss_{t} - \ww_{t-1})\rangle + \frac{\gamma_t^2 L}{2n} \|\XX(\sss_t - \ww_{t-1})\|_2^2 &\text{(Def of $\ww_t$)} \\
    \leq&  f(\XX\ww_{t-1}) + \gamma_t\langle \nabla f(\XX\ww_{t-1}), \XX(\sss_{t} - \ww_{t-1})\rangle + \gamma^2_t\frac{LD^2_2}{2n} &\text{(Def of $D_2$)} \\
    \bsp
    =& f(\XX\ww_{t-1}) + \gamma_t\langle \nabla f(\XX\ww_{t-1}) - \balpha, \XX(\sss_{t} - \ww_{t-1})\rangle \\
    & + \gamma_t\langle \balpha, \XX(\sss_t - \ww_{t-1})\rangle +  \gamma^2_t\frac{LD^2_2}{2n}
    \esp 
    & (\pm \gamma_t \langle \balpha, \XX(\sss_t - \ww_{t-1})\rangle) \\
    \bsp
    \leq& f(\XX\ww_{t-1}) + \gamma_t\langle \nabla f(\XX\ww_{t-1}) - \balpha, \XX(\sss_{t} - \ww_\star + \ww_\star - \ww_{t-1})\rangle \\
    &+ \gamma_t\langle \balpha, \XX(\ww_\star - \ww_{t-1})\rangle +  \gamma^2_t\frac{LD^2_2}{2n}
    \esp  
    \bsp
    &\text{($\pm \langle \nabla f(\XX \ww_{t-1}) - \balpha, \XX\ww_\star\rangle$)}\\
    &\text{(Opt. of $\sss_t$)}
    \esp \\
    \bsp
    =& f(\XX\ww_{t-1}) + \gamma_t \langle \nabla f(\XX\ww_{t-1}) - \balpha, \XX(\sss_t - \ww_{\star})\rangle  \\
    &+ \gamma_t\langle \nabla f(\XX\ww_{t-1}), \XX(\ww_\star - \ww_{t-1})\rangle  + \gamma_t^2\frac{LD^2_2}{2n}\esp &\text{(rewrite)} \\
    \bsp
    \leq& f(\XX\ww_{t-1})  + \gamma_t \langle \nabla f(\XX\ww_{t-1}), \XX(\ww_{\star} - \ww_{t-1})\rangle  \\
    &+ \gamma_t D_\infty \|\nabla f(\XX\ww_{t-1}) - \balpha\|_1 + \gamma_t^2\frac{LD^2_2}{2n}\esp &\text{(Hölder's inequality and def of $D_\infty$)} \\
    \bsp
    \leq& f(\XX\ww_{t-1}) + \gamma_t (f(\XX\ww_\star) - f(\XX\ww_{t-1})) + \gamma_t    D_\infty \|\nabla f(\XX\ww_{t-1}) - \balpha\|_1  \\
    &+ \gamma_t^2\frac{LD^2_2}{2n}\esp &\text{(Convexity of $f$)}
    \end{align}
    Subtracting $f(\XX\ww_\star)$ on both sides, we get
    \begin{align}
    f(\XX\ww_{t}) - f(\XX\ww_{\star}) \leq (1-\gamma_t)(f(\XX\ww_{t-1}) - f(\XX\ww_{\star})) + \gamma_t  D_\infty \|\nabla f(\XX\ww_{t-1}) - \balpha\|_1 + \gamma_t^2\frac{LD^2_2}{2n}.
\end{align}
We define $H_t = \|\balpha - \nabla f(\XX\ww_{t-1})\|_1$, and recall the definition of $\varepsilon_t$ to obtain the claimed bound
\begin{align}
    \varepsilon_t \leq (1-\gamma_t) \varepsilon_{t-1} + \gamma_t^2\frac{LD^2_2}{2n} + \gamma_t D_\infty H_t.
\end{align}
\end{proof} 

\pagebreak

\section{Completing the proof for Theorem~\ref{theorem:convex_rate}.}
\label{apx:tying_up}
Given the three key Lemmas 1-3, we can finish the proof. Under the hypotheses of Theorem~\ref{theorem:convex_rate}, let us consider step $t$ of the SFW algorithm.

We plug our upper bound on $H_t$ \eqref{rainy} into the upper bound from Lemma 1 \eqref{eq:subopt_last} and take expectations on both sides to obtain the following upper bound on the expected primal-suboptimality~$\EE\varepsilon_t$.

\begin{align}
\bsp
    \EE \varepsilon_t \leq& (1-\gamma_t)\EE\varepsilon_{t-1} + \gamma_t^2 \frac{LD^2_2}{2n} \\
    &+ \gamma_t\mfrac{2 L D_1 D_\infty}{n} \left(\frac{2(n-1)}{t+2} + \left(1-\frac{1}{n}\right)^{t/2}\log t\right)\\
    &+ \gamma_t D_\infty \left(1-\frac{1}{n}\right)^t H_0 .
\esp
\end{align}

By specifying the step-size $\gamma_t = \frac{2}{t+2}$ and multiplying the previous inequality by $(t+1)(t+2)$, we get an expression in which the expected sub-optimalities telescope under summation. This allows us to get the promised rate. For simplicity, we upper bound $\frac{t+1}{t+2}$ by $1$.

Let $\Gamma_t = (t+1)(t+2)\EE\varepsilon_t $. We have

\begin{align}
\bsp
    &\Gamma_t \leq \Gamma_{t-1} + 2 \mfrac{LD^2_2}{n}+ 8 \mfrac{(n-1)}{n}{L D_1 D_\infty} \\
    &\quad+ 4 \mfrac{LD_1 D_\infty}{n} (t+1)\!\!\left(1-\mfrac{1}{n}\right)^{t/2}\log t\\
    &\quad+ 2 D_\infty H_0 (t+1) \left(1-\mfrac{1}{n}\right)^t
\esp
\end{align}

If we sum this expression over time-steps $k=1,\dots, t$, we obtain 

\begin{align}
\bsp
    \Gamma_t \leq& \Gamma_0 + 2L\left( \frac{D^2_2 + 4(n-1)D_1D_\infty}{n}  \right) t  \\
    &\quad+ 4 \mfrac{L D_1 D_\infty}{n} B_t\\
    &\quad+ 2 D_\infty H_0 C_t\,,
\esp
\end{align}

where 

\begin{align}
    B_t =& \sum_{k=1}^t(k+1)\left(1-\frac{1}{n}\right)^{k/2}\log k \leq 16 n^3 \\
    C_t =&  \sum_{k=1}^t(k+1)\left(1-\frac{1}{n}\right)^k \leq n^2\,.
\end{align}
These bounds use Taylor series and are proven in Appendix  \ref{apx:taylor_bounds}. By combining the previous two bounds we get the following upper bound
\begin{align}
\bsp
    \Gamma_t &\leq \Gamma_0 + 2L\left( \frac{D^2_2 + 4(n-1)D_1D_\infty}{n}  \right) t  \\
    &\quad+ (2 D_\infty H_0 + 64 {L D_1 D_\infty}) n^2.
\esp
\end{align}

We divide this upper bound by $(t+1)(t+2)$, and finally use the bound $\frac{1}{(t+1)(t+2)}\leq \frac{1}{t^2}$ to obtain the following rate on $\EE\varepsilon_t$:

\begin{align}
    \EE\varepsilon_t &\leq 2L\left( \frac{D^2_2 + 4(n-1)D_1D_\infty}{n}  \right) \frac{t}{(t+1)(t+2)} +  \frac{2 \varepsilon_0 + (2 D_\infty H_0 + 64 {L D_1 D_\infty}) n^2}{(t+1)(t+2)} \ . 
\end{align}

\pagebreak
\section{Bounds for $B_t$, $C_t$.}
\label{apx:taylor_bounds}
For $B_t$, we use the (aggressive) bound $\log k \leq k-1$ and notice that $\sum_{k=1}^{\infty} (k+2)(k+1)\rho^{k} = \frac{2}{(1-\rho)^3}$ to get 

\begin{align}
    B_t &\leq \sum_{k=1}^t(k-1)(k+1)\left(1-\frac{1}{n}\right)^{k/2} \\
    &\leq \sum_{k=1}^t (k+2)(k+1)\left(1-\frac{1}{n}\right)^{k/2} \\
    &\leq 2\left(\frac{1}{1- \sqrt{1-\frac{1}{n}}}\right)^3 \\
    &= 2n^3\left(1 + \sqrt{1-\frac{1}{n}}\right)^3 \leq 16n^3.
\end{align}

Notice that $C_t$ is the beginning of the Taylor series expansion of $\frac{d}{dx}\frac{1}{1-x} = \frac{1}{(1-x)^2}$, for $x=\frac{n-1}{n}$. We can upper bound it by the full series, leading to

\begin{align}
    C_t \leq& \left(\frac{1}{1-\left(1-\frac{1}{n}\right)}\right)^2 = n^2.
\end{align}
\clearpage
\section{Proof of Proposition \ref{prop:proxy_gap_informal}.}
\label{apx:proof_proxy_gap}
\begin{proof} It suffices to prove that 

\begin{align}
    |g_t - \hat{g}_t | \leq D_\infty H_t \ . 
\end{align}

We recall the definitions of the true and stochastic FW gaps:
\begin{align}
    g_t &= \max_{\sss \in \CC} \langle \nabla f(\XX\ww_{t-1}), \XX(\ww_{t -1} - \sss)\rangle \defas \langle \nabla f(\XX\ww_{t-1}), \XX(\ww_{t -1} - \sss_t)\rangle \\
    \hat g_t &= \max_{\sss \in \CC} \langle \balpha_t, \XX(\ww_{t -1} - \sss)\rangle \defas \langle \balpha_t, \XX(\ww_{t -1} - \hat \sss_t)\rangle
\end{align}

where we associate $\sss_t$ to the true gap, and $\hat \sss_t$ to the stochastic gap.

Now, 

\begin{align}
    g_t &= \langle \nabla f (\XX\ww_{t-1}), \XX\left(\ww_{t-1} - \sss_t\right)\rangle \\
    &= \langle \balpha_t, \XX\left(\ww_{t-1} - \sss_t\right)\rangle + \langle \nabla f (\XX\ww_{t-1}) - \balpha_t, \XX\left(\ww_{t-1} - \sss_t\right)\rangle\\
    &\leq \langle \balpha_t, \XX\left(\ww_{t-1} - \hat \sss_t\right)\rangle + \langle \nabla f (\XX\ww_{t-1}) - \balpha_t, \XX\left(\ww_{t-1} - \sss_t\right)\rangle\\
    &\leq \hat g_t + D_\infty H_t,
\end{align}
where the first inequality results from optimality of $\hat \sss_t$, and the second inequality results from Hölder's inequality and the definitions of $H_t$ and $D_\infty$.

Both gaps $g_t$ and $\hat g_t$ play symmetric roles in the previous bounds, therefore, we also have the bound: 
\begin{align}
    \hat g_t \leq g_t + D_\infty H_t,
\end{align}
thus concluding the proof.
\end{proof}

\pagebreak
\section{Proof of Theorem \ref{thm:nonconvex_rate}.}
\label{apx:proof_non_convex}

Let us now show that when the $f_i$s are $L$-smooth, and the iterates are given by the proposed SFW, then $\liminf_{t\to\infty} \EE_t[g_t] = 0$.

\begin{proof}
We adapt the proof of Lemma~\ref{lemma:sufficient_decrease}. At step $t$, using Proposition~\ref{prop:smoothness} with $p=2$, we obtain
\begin{align}
    f(\XX\ww_t) &\leq  f(\XX\ww_{t-1}) + \gamma_t \langle \nabla f(\XX\ww_{t-1}), \XX(\sss_t - \ww_{t-1})\rangle + \gamma_t^2 \frac{L D_2^2}{2n} \\
    & = f(\XX\ww_{t-1}) - \gamma_t \hat g_t + \gamma_t\langle \nabla f(\XX\ww_{t-1}) - \balpha_t, \XX(\sss_t - \ww_{t-1})\rangle + \gamma_t^2 \frac{L D^2_2}{2n} \\
    &\leq f(\XX\ww_{t-1}) - \gamma_t \hat g_t + \gamma_t D_\infty H_t + \gamma_t^2 \frac{L D^2_2}{2n} \ .
\end{align}

Rearranging, we have 

\begin{align}
    \gamma_t\hat{g}_t \leq f(\XX\ww_{t-1}) - f(\XX\ww_{t})+ \gamma_t D_\infty H_t + \gamma_t^2 \frac{L D_2^2}{2n}.
\end{align}

Therefore, summing for $u=1,\dots, t$ 

\begin{align}
    \sum_{u=1}^t \gamma_u\hat{g}_u \leq f(\XX\ww_0) -  f(\XX\ww_{t})+ \sum_{u=1}^t \gamma_u D_\infty H_u + \gamma_u^2 \frac{L D_2^2}{2n}.
\end{align}

The right hand side is bounded in expectation: $f$ is continuous on the compact set $\CC$, and the series converges, since $\EE_t H_t=\mathcal{O}(\frac{1}{t})$ and $\gamma_t^2 = \mathcal{O}(1/t^2)$. This implies that $\liminf \EE_t \hat{g}_t = 0$, since $\gamma_t = \frac{2}{t+2}$ is not the general term of a convergent series. Finally, since $|g_t - \hat{g}_t| \leq D_\infty H_t$ (Appendix \ref{apx:proof_proxy_gap}), this yields the claimed result.
\end{proof}
\clearpage
\section{Comparison with other methods.}
\label{apx:comparison_other_methods}
To make the comparison with other methods easier to grasp and to implement for the interested reader, we report pseudo code using our notation for the Stochastic Frank-Wolfe algorithms in \citet{lu2018generalized} and \citet{mokhtari2018stochastic}. In the case of \citet{mokhtari2018stochastic}, we also specify their algorithm in the same formal setting as ours where $f(\XX\ww) = \frac{1}{n}f_i(\xx_i\tran\ww)$ and the sampling is over datapoints.

\subsection{\citet{mokhtari2018stochastic}}

\citet{mokhtari2018stochastic} have two sets of step-sizes, which we denote by $\rho_t$, $\gamma_t$. They use a form of momentum on an \emph{unbiased} estimator of the gradient using the $\rho_t$ step sizes. The values they use are $\gamma_t = \frac{1}{t+1}$ and $\rho_t = \frac{1}{(t+1)^{2/3}}$.

\begin{algorithm}[hb]
  \caption{\citet{mokhtari2018stochastic}}
  \label{alg:mok}
\begin{algorithmic}[1]
  \STATE {\bfseries Initialization:} $\ww_0\in\CC$, $\balpha_0=0$, $\rr_0 =0$
    \FOR{$t=1, 2, \dots, $}

        \STATE Sample $i$ uniformly at random in $\{1, \dots, n\}$ \\
        \STATE $\balpha_t^i = (1-\rho_t)\balpha_{t-1}^i + \rho_t f_i'(\xx_i\tran \ww_{t-1})$ \\
        \STATE $\rr_t = \rr_{t-1} + (\balpha_t^i - \balpha_{t-1}^i)\xx_i$
        \STATE $\sss_{t} = \LMO(\rr_{t})$\\
        \STATE $\ww_{t} = (1 - \gamma_t)\ww_{t-1} + \gamma_t \sss_t$ 
  \ENDFOR
\end{algorithmic}
\end{algorithm}

\subsection{\citet{lu2018generalized}}

\citet{lu2018generalized} also have two step-size sequences given by $\gamma_t = \frac{2(2n_b+t)}{(t+1)(4n_b+t+1)}$ and $\delta_t = \frac{2n_b}{2n_b + t + 1}$, where $n_b$ is the number of batches, i.e. $\floor{n/b}$, with $n$ the number of samples in the dataset, and $b$ the chosen batch size. They use a form of momentum on the argument to a given $f_i$, and compute the gradient at an averaged iterate, which we denote by $\bsigma^i_t$.
In our notation, $t$ is the iteration step and $i$ corresponds to the $i$-th datapoint.

\begin{algorithm}[hb]
  \caption{\citet{lu2018generalized}}
  \label{alg:lu_f}
\begin{algorithmic}[1]
  \STATE {\bfseries Initialization:} $\ww_0\in\CC$, $\bsigma_0=\XX\ww_0$, $\balpha_0=0$, $\rr_0 =0$
    \FOR{$t=1, 2, \dots, $}
        \STATE $\sss_{t} = \LMO(\rr_{t-1})$
        
        \STATE Sample $i$ uniformly at random in $\{1, \dots, n\}$ 
        
        \STATE $\bsigma_t^i = (1-\delta_t) \bsigma^i_{t-1} + \delta_t( \xx_i\tran\sss_{t})$ 
        
         \STATE $\balpha_t^i = \frac{1}{n}f'_i(\bsigma_t^i)$ 
         
        \STATE $\rr_t = \rr_{t-1} +\left(\balpha_t^i - \balpha_{t-1}^i\right) \xx_i $
        
        \STATE $\ww_{t} = (1 - \gamma_t)\ww_{t-1} + \gamma_t \sss_t$ 
  \ENDFOR
\end{algorithmic}
\end{algorithm}

\pagebreak


\end{document}